\documentclass[12pt]{amsart}
\usepackage{amsmath} 
\usepackage{amssymb}
\usepackage{amsfonts}
\usepackage{a4}
\usepackage[T1]{fontenc}
\usepackage{ndstye}
\usepackage{comment}
\usepackage{mathtools}

\newcommand{\bc}{\mathbf C}

\newcommand{\br}{\mathbf R}
\newcommand{\bz}{\mathbf Z}
\newcommand{\bn}{\mathbf N}
\newcommand{\Cal}{\mathcal}

\newcommand{\gs}{\gtrsim}

\newcommand{\id}{\operatorname{Id}}

\newcommand{\im}{\operatorname{Im}}

\newcommand{\ls}{\lesssim}

\newcommand{\mn}[1]{\Vert#1\Vert}

\newcommand{\ol}{\overline}

\newcommand{\re}{\operatorname{Re}}
\newcommand{\restr}[1]{\big|_{#1}}

\newcommand{\set}[1]{\left\{\,#1\,\right\}}

\newcommand{\supp}{\operatorname{\rm supp}}

\newcommand{\w}[1]{\langle #1\rangle }

\newcommand{\wf}{\operatorname{WF}}

\newcommand{\wt}{\widetilde}


\hyphenation{hyp-er-bol-ic osc-il-lat-ory propo-sition Uhl-mann
para-metri-ce para-metri-ces local-i-zation local-ize homo-gene-ous
homo-ge-nei-ty Mori-moto pseudo-differ-ential ortho-gonal ortho-normal
satis-fy satis-fies propa-gate propa-gation differ-ential
differ-entiate differ-entiation differ-entiations hexa-gonal
di-ago-nal di-ago-nal-ize-able acad-e-my acad-e-mies af-ter-thought
anom-aly anom-alies an-ti-deriv-a-tive an-tin-o-my an-tin-o-mies
apoth-e-o-ses apoth-e-o-sis ap-pen-dix ar-che-typ-al as-sign-a-ble
as-sist-ant-ship as-ymp-tot-ic asyn-chro-nous at-trib-uted
at-trib-ut-able bank-rupt bank-rupt-cy bi-dif-fer-en-tial blue-print
busier busiest cat-a-stroph-ic cat-a-stroph-i-cally con-gress
cross-hatched corre-sponding corre-spondingly data-base de-fin-i-tive
de-riv-a-tive dis-trib-ute depend-ing depend-ingly dri-ver dri-vers
eco-nom-ics econ-o-mist elit-ist equi-vari-ant ex-quis-ite
ex-tra-or-di-nary flow-chart for-mi-da-ble forth-right friv-o-lous
ge-o-des-ic ge-o-det-ic geo-met-ric griev-ance griev-ous griev-ous-ly
hexa-dec-i-mal ho-lo-no-my ho-mo-thetic ideals idio-syn-crasy
in-fin-ite-ly in-fin-i-tes-i-mal ir-rev-o-ca-ble key-stroke
lam-en-ta-ble light-weight mal-a-prop-ism man-u-script mar-gin-al
meta-bol-ic me-tab-o-lism meta-lan-guage me-trop-o-lis
met-ro-pol-i-tan mi-nut-est mol-e-cule mono-chrome mono-pole
mo-nop-oly mono-spline mo-not-o-nous mul-ti-fac-eted mul-ti-plic-able
non-euclid-ean non-iso-mor-phic non-smooth par-a-digm par-a-bol-ic
pa-rab-o-loid pa-ram-e-trize para-mount pen-ta-gon phe-nom-e-non
post-script pre-am-ble pro-ce-dur-al pro-hib-i-tive pro-hib-i-tive-ly
pseu-do-dif-fer-en-tial pseu-do-fi-nite pseu-do-nym qua-drat-ic
quad-ra-ture qua-si-smooth qua-si-sta-tion-ary qua-si-tri-an-gu-lar
quin-tes-sence quin-tes-sen-tial re-arrange-ment rec-tan-gle
ret-ri-bu-tion retro-fit retro-fit-ted right-eous right-eous-ness
ro-bot ro-bot-ics sched-ul-ing se-mes-ter semi-def-i-nite
semi-ho-mo-thet-ic set-up se-vere-ly side-step sov-er-eign spe-cious
spher-oid spher-oid-al star-tling star-tling-ly sta-tis-tics
sto-chas-tic straight-est strange-ness strat-a-gem strong-hold
sum-ma-ble symp-to-matic syn-chro-nous topo-graph-i-cal tra-vers-a-ble
tra-ver-sal tra-ver-sals treach-ery turn-around un-at-tached
un-err-ing-ly white-space wide-spread wing-spread wretch-ed
wretch-ed-ly Brown-ian Eng-lish Euler-ian Feb-ru-ary Gauss-ian
Grothen-dieck Hamil-ton-ian Her-mit-ian Jan-u-ary Japan-ese Kor-te-weg
Le-gendre Lip-schitz Lip-schitz-ian Mar-kov-ian Noe-ther-ian
No-vem-ber Rie-mann-ian Schwarz-schild Sep-tem-ber singu-lari-ty
singu-lari-ties}

\newcommand{\wh}{\widehat}

\hsize=160  mm
\vsize=250  mm
	
\numberwithin{equation}{section}

\begin{document}
	
\baselineskip 19.95pt 
\lineskip 2pt
\lineskiplimit 2pt

\title[Local Solvability]{Local Solvability of quasilinear Pseudo-\\differential Operators of Real Principal Type}
\author[Nils Dencker]{Nils Dencker\\ \\Lund University}
\address{Centre for Mathematical Sciences, University of Lund, Box 118, S-221 00 Lund, Sweden}
\email{nils.dencker@gmail.com} 

\begin{abstract}
	In this paper we prove local solvability of quasilinear pseudodifferential operators which has homogeneous principal symbol of real principal type. This is a generalization of Theorem~A.1 in arXiv:2403.19054, which treated the case of quasilinear partial differential operators of order 2. The proof is by microlocalization to first order model operators.
\end{abstract}

\subjclass[2000]{35A01 (primary) 35S05, 58J40, 47G30 (secondary)}

\maketitle

\thispagestyle{empty}

\section{Introduction}

The solvability of differential operators has been an important issue since Hans Lewy \cite{lewy} presented a complex vector field that was not solvable anywhere. H\"ormander \cite{ho:nosolv} then showed that almost all linear differential operators are {\em not} solvable. After some development, Nirenberg and Treves \cite{nt:69} formulated their famous conjecture 1969 that the condition for solvability of a linear (pseudo)differential operator of principal type is condition~($ \Psi $). This is an invariant condition on the sign changes of the imaginary part of the principal symbol on the bicharacteristics of the real part. Principal type means that the principal symbol  vanishes of  first order on its zeros, so the bicharacteristics are not degenerate. The necessity of condition  ($ \Psi $) was proved by H\"ormander \cite{ho:nec} in 1980, the sufficiency was proved by the author \cite{de:nt} in 2006, and improved by Lerner \cite{ln:cutloss}. 

One important class of differential operators are those of {\em real principal type} which are the operators of principal type with real principal  symbols. For the linear PDO of real principal type, H\"ormander \cite{ho:thesis} proved solvability already in 1955 which he extended to pseudodifferential operators~\cite{HO:FIO2} in 1972. We shall extend this solvability result to quasilinear  pseudodifferential operators, showing local solvability with solutions having prescribed derivatives at a given point. The solution would in general not be  unique but the result can be used, e.g., for finding local solutions vanishing of some order at a point. For a special case, see the Appendix of~\cite{de:nsuff}.

To state the problem, let  $ f (x)\in C^\infty $ and consider the equation
\begin{equation}\label{pequation}
P(u,\partial^\alpha u, x, D) u = f, \quad \partial^\alpha u(x_0) = u_\alpha \quad |\alpha | < m
\end{equation}
where $ P $ is a nonlinear pseudodifferential operator in the $ x $ variables of order~$m \in \bn $ depending $ C^\infty $ on $ u $ and $\partial^\alpha u  $ for $  |\alpha | < m $. 
Then we have
\begin{equation}\label{psymbol}
P(v,x, D)  = p_m(v,x, D) + p_{m-1}(v,x, D) 
\end{equation}
modulo $ \Psi^{m-2} $ where $ p_j (v,x, \xi)  \in S^j  $ with  $ v = (u, \dots , \partial^\alpha u,\dots) 
\in C^\infty$ for  $ { |\alpha | < m} $.

We shall simplify the notation and write $ p(v, x, \xi)  $ instead of  $ p(u,\partial^\alpha u, x, \xi)  $ and $ \ol u $ instead of $(u_0, \dots, u_\alpha, \dots)  $, $ | \alpha  | < m $.
We shall assume that the principal symbol $ p_m $ is homogeneous of real principal type so that,  if $ \xi_0 \ne 0 $ then, after a linear symplectic change of variables, we have $\partial_\xi p_m \ne 0 $ when $ p_m = 0 $  at  $ ( \ol u,x_0,  \xi_0) $, and thus in a conical neighborhood, see \cite[Th. 4.3]{ho:weyl}. 
Here the symbols $ p_j(v,x,\xi) $ could be defined for real or complex valued $ v $ but need not be analytic.

We will use the classical Kohn-Nirenberg quantization having classical symbol expansions and  assume that $ P$ is properly supported so that $ P $ maps $  C^\infty $ to $  C^\infty $ and $  C^\infty_0 $ to $  C^\infty_0 $ after adding a term in $ \Psi^{-\infty}$.
Since the principal symbol is real valued, we shall also treat the case of real solvability.

\begin{defn}
	If  the symbols satisfy $p_j(v,x,  \xi) =  \ol  p_j(v,x,  -\xi) $, $ \forall \, j $, then $ P $ maps real valued functions to real valued functions, since $\ol{p(x,D)u} =  \ol{p}(x,-D)\ol u $, so $\re p(x,D)u = p(x,D)\re u $.
	Operators with this property will be called {\em real operators}. 
\end{defn}

The following is the main result of the paper.

\begin{thm}\label{appthm}
	Let $ X $ be a manifold and $ P \in \Psi^m(X)$ be a properly supported nonlinear operator given by~\eqref{psymbol}  so that $ p_j (v,x, \xi)  \in S^j(T^*X)  $ depends  $ C^\infty $ on $ \partial^\alpha v(x) \in   C^\infty  $, $ | \alpha  | < m $, and the principal symbol $ p_m $ is  homogeneous of degree $ m \in \bz_+ $.
	Let $ \ol u =  \set{ u_\alpha}_{ |\alpha | < m} $ and assume that $ p_m(v,x,\xi) $ is of real principal type in a neighborhood of $ ( \ol u, x_0) $. Then for any $ f  \in C^\infty(X) $  there exists a neighborhood $ U $ of   $ x_0 $ so that~\eqref{pequation} 
	has solution $u \in  C^\infty $ in $ U $. The  neighborhood~$ U $ will only depend on the bounds on $\ol u $,  $ f $ and the symbol of~$ P $.
	If $ P $ is a real operator of real principal type and the data $ f $ and $\ol u =   \set{ u_\alpha}_{ |\alpha | < m}  $  are real valued, then there is a real valued solution $u \in  C^\infty $  to~\eqref{pequation} in $ U $. 
	\end{thm}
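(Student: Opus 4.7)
The plan is to combine H\"ormander's classical local solvability theorem for linear PDO of real principal type with a fixed-point iteration that absorbs the quasilinear dependence on $u$ and its derivatives. The key linear input, cited in the introduction from \cite{HO:FIO2}, is that a properly supported PDO with homogeneous real principal symbol of real principal type is microlocally conjugate, via an elliptic Fourier integral operator, to the first-order model $D_{x_1} + Q$ with $Q \in \Psi^0$; the equation $(D_{x_1} + Q) w = g$ with prescribed trace on $\set{x_1 = 0}$ is then a Volterra problem, solved by iterated integration in $x_1$ on a slab $|x_1| < \delta$, with the width $\delta$ and the operator norm of the inverse controlled by finitely many seminorms of the symbol.

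To set up the iteration, I would first construct an initial approximation $u_{(0)} \in C^\infty$ with $\partial^\alpha u_{(0)}(x_0) = u_\alpha$ for $|\alpha| < m$, for instance a cut-off Taylor polynomial, and set $v_{(0)}$ accordingly. By continuity in $v$ the symbol $p_m(v_{(0)}, x, \xi)$ remains of real principal type on a conic neighborhood of $(x_0, \xi_0)$, so the frozen operator $P_{(0)} = P(v_{(0)}, x, D)$ admits the model reduction. Solve the linear equation $P_{(0)} u_{(1)} = f$ with the prescribed jet at $x_0$, then iterate: given $u_{(k)}$, define $u_{(k+1)}$ as the solution of $P(v_{(k)}, x, D) u_{(k+1)} = f$ with the correct initial data at $x_0$. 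Smooth dependence of the symbols $p_j$ on $v$ converts $C^\infty$ bounds on $u_{(k)}$ into uniform symbol seminorm bounds on $P(v_{(k)}, x, D)$, and hence into uniform bounds on the inverses produced by the model reduction. A contraction estimate for $u_{(k+1)} - u_{(k)}$ in a high Sobolev norm, using the Lipschitz dependence of the $p_j$ on $v$ and the smallness of the Volterra kernel on narrow slabs, should yield a limit $u \in C^\infty(U)$ solving \eqref{pequation}.

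The principal obstacle is keeping the neighborhood $U$ uniform along the iteration: the domain of validity of the model reduction must depend only on an a priori bound on $\ol u$, $f$ and finitely many symbol seminorms, not on the particular iterate $u_{(k)}$. This forces a careful quantitative version of H\"ormander's construction, in which the canonical transformation straightening the Hamilton flow of $p_m$ and the conjugating FIO are produced with explicit stability under perturbations of the coefficients; this is where most of the technical work should sit. For the reality statement, when $P$ is real the Hamilton flow of $p_m$ is a real symplectic flow, so the canonical transformation and the associated FIO can be chosen to commute with complex conjugation; the model reduction and the Volterra integration then map real data to real solutions, which yields the claimed real solvability when $f$ and $\ol u$ are real.
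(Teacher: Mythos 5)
Your overall strategy---microlocal reduction to a first order model plus an iteration freezing the coefficients at the previous iterate---is the same as the paper's, but as written it has two genuine gaps. First, a single FIO conjugation to $D_{x_1}+Q$ is only valid in one conic neighborhood, while $Pu=f$ must be solved for arbitrary $f$; you need a partition of unity $\{\varphi_j\}$ in $\xi$, solutions $u_j$ of the model equations in each cone, and a summation $u_\varrho=\sum_j b_j\psi_{j,\varrho}\Phi u_j$, and then the errors created by this patching must be inverted. These errors are not small just because the slab in $x_1$ is narrow: the remainder $R$ in the factorization $Pb=aQ+R$ of~\eqref{prepform} has order $1$ outside the cone and only order $-1$ inside it, and the cutoffs produce smoothing but non-small low-frequency terms. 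The paper needs a two-parameter smallness argument---first a frequency truncation at $|\xi|\ge\varrho$ with $\varrho$ large to make the order $-1$ part small in operator norm, then a spatial localization on a ball of radius $\delta_0\lesssim 1/\varrho$ to make the remaining smoothing kernel small (Lemma~\ref{estlem2})---before the Neumann series closes. Your ``smallness of the Volterra kernel on narrow slabs'' does not address either of these remainders, and this is where most of the actual work lies.

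Second, the jet condition $\partial^\alpha u(x_0)=u_\alpha$, $|\alpha|<m$, is not initial data for the first order model: the model equation only lets you impose $u_j|_{x_1=0}=0$, and after summing the microlocal pieces the derivatives of $u_\varrho$ at $x_0$ are not under your control but only $o(\varrho^{1+|\alpha|-m})$ small. The paper handles this by subtracting the Taylor polynomial as in~\eqref{reduc} and then, since the constructed solution satisfies the jet conditions only approximately, solving the finite-dimensional perturbed system~\eqref{initrho} for the Taylor coefficients by a mapping degree argument as $\varrho\to\infty$; nothing in your proposal plays this role. A lesser but real concern is your claim of a contraction for $u_{(k+1)}-u_{(k)}$ in a \emph{high} Sobolev norm: differencing the equations produces $\bigl(P(v_{(k)})-P(v_{(k-1)})\bigr)u_{(k+1)}$, whose estimate loses derivatives, so contraction in the top norm is generally unavailable. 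The paper instead proves uniform bounds in every Sobolev norm (the smallness entering through the factor $\varrho^{1-m}$ in~\eqref{stripest}--\eqref{indest}) and extracts a convergent subsequence by Arzel\`a--Ascoli; you would need either that compactness argument or the standard low-norm-contraction/high-norm-boundedness scheme, neither of which is set up in your sketch.
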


Observe that the solution is not unique, for uniqueness one needs stronger conditions, for example,  hyperbolicity of $ P $ and initial values at a noncharacteristic hypersurface. The result will also hold if the symbol and the data depends $ C^\infty $ on a parameter $ w  \in \bc^k$  as in~\cite[Theorem~A.1]{de:nsuff}, which gives a special case of Theorem~\ref{appthm} for second order real nonlinear PDO of real principal type and real data. 
Theorem~\ref{appthm} will be proved in Section~\ref{thmpf} after a microlocal reduction to a nonlinear first order normal form. 

\begin{rem}
	It follows from the proof that Theorem~\ref{appthm} also holds if $ P(v,x,D) $ depends $ C^\infty $ on $\set{ u_\alpha}_{ |\alpha | < k}  $ for $ k  > m $ and $ p_m $ is of real principal type in a neighborhood of \/$ (x_0, \ol u ) $ where $ \ol u= \set{u_\alpha}_{| \alpha| < m}$, thus for any value of $u_\alpha  $ when $m  \le  | \alpha  | < k $. 
\end{rem}

\section{The Normal Form}

In this section we shall obtain a microlocal normal form of the operator. Since the solutions are local, we can reduce to the case  when $X = \br^n $.
For the proof of  Theorem~\ref{appthm} we shall solve the linear equation
\begin{equation}\label{plineq}
P(u,x, D) v(x) = f(x) \qquad \partial^\alpha v(x_0) = v_\alpha \quad |\alpha | < m
\end{equation}
which is a linear $ \Psi $DE  with $ u  \in C^\infty$ as parameter. 
By using iteration, estimates and compactness we shall obtain a solution to~\eqref{pequation}.

To solve the linear equation~\eqref{plineq} we shall microlocalize  in cones in the $ \xi $ variables. We say that a pseudodifferential operator (or Fourier integral operator)  $ a(v,x,D) $ depends  $ C^\infty $ on a parameter $ v(x) \in   C^\infty  $ if any $ C^\infty $ seminorm of the symbol (and phase function) is bounded by a finite number of  seminorms  of $ v $. 
For operators with symbols in $ S^{-\infty} $ this
means that the $ C^\infty $ kernel is a  $ C^\infty $ function of  $ v $.
Observe that compositions and adjoints of such operators also depend $ C^\infty $ on $ v $, see for example Lemma~\ref{contlem} and Remark~\ref{fiorem}.

\begin{defn}\label{gammadef}
	For any $ \varepsilon > 0 $  and $ \xi_0 \in \br^n $ such $ | \xi_0 | = 1 $ we let
	\begin{equation}\label{gammaform}
	\Gamma_{\xi_0, \varepsilon} = \set{\xi : \left| \xi /|\xi| - \xi_0  \right| < \varepsilon  }
	\end{equation}
	which is a conical neighborhood of  $ \xi_0 $.
\end{defn}

Recall that a partition of unity is a set $ \set{\phi_j}_j $ such that $ 0 \le \phi_j  \in C^\infty$ and $ \sum_j \phi_j \equiv 1 $.

\begin{rem}\label{microrem}
	For any $ \varepsilon > 0 $ small enough we can find a partition of unity on $ S^*\br^n $ and extend it by homogeneity in $ \xi $ to get a partition of unity $ \set{\varphi_j(\xi)}_j $ on $ T^*\br^n \setminus 0 $ such that  $0 \le \varphi_j \in S^0 $ is homogeneous and supported in $ \Gamma_{\xi_j, \varepsilon} $ for some $ |\xi_j| = 1 $. We can also find  $ \set{\psi_j}_j $ such that $ 0 \le \psi_j \in S^0$ is homogeneous and supported in $ \Gamma_{\xi_j, \varepsilon} $ so that  $ \psi_j = 1 $ on $ \supp \phi_j $. 
	
	We shall also localize when $| \xi | \ge \varrho \ge 1 $ by $ \chi_\varrho(\xi) = \chi(| \xi|/\varrho ) \in C^\infty $, where  $\chi \in C^\infty(\br) $ such that $ 0 \le \chi \le 1 $,  $ \chi(t) = 0 $ when  $t \le 1 $ and  equal to $ 1 $ when $t \ge 2  $.  Let $\varphi_{j, \varrho} = \chi_\varrho \varphi_j $  and $\psi_{j, \varrho} = \chi_\varrho \psi_j $
	then $ \varphi_{j} - \varphi_{j, \varrho} $ and $ \psi_j - \psi_{j, \varrho} $ are in $ S^{-\infty} $, $ \forall\, j $ and $ \forall\, \varrho \ge 1 $.
	We also have that $ \varrho  \chi_{\varrho} \in S^{1}$ uniformly in $ \varrho \ge 1 $.
\end{rem}

In fact, since  $0 \le  \chi_{\varrho} \le 1$ and $ \varrho \le | \xi | $ in the support of this symbol, we find that  $| \varrho  \chi_{\varrho}( \xi)| \le  |\xi|$. Taking $ \xi $ derivatives  of  $ \varrho  \chi_{\varrho}  $ gives a factor $ \varrho^{-1} $ together with a symbol supported where $\varrho \le| \xi | \le 2\varrho $.

Next, we have to prepare the linear operator $ P $ microlocally with respect to this partition of unity.
We shall use microlocal pseudodifferential operators which may give complex solutions. But in the case when  $ P $ is a real PDO, we may take the real part of the solutions to the linear equations to obtain  real solutions to~\eqref{pequation}. 
In the following we shall denote $ \w{D} =  (1 + |D|^2) ^{1/2} \in \Psi^1$, and
$ I^k $ the classical Fourier integral operators  of order $ k $ with homogenous phase functions and classical symbol expansions.

\begin{prop}\label{normalform}
	Let $ P $ be given by Theorem~\ref{appthm} with 
	$ v  \in C^\infty$  
	and let  $ \Gamma=  \Gamma_{\xi_0, \varepsilon}$ be defined by~\eqref{gammaform} for  $ | \xi_0 | = 1 $,  $0 <  \varepsilon \le \varepsilon_0 $. Then for any $ x = x_0 $ and $ v = v_0$  there exists $ \varepsilon_0 > 0$ small enough and real valued $  0  \ne a(v,x, \xi) \in S^0 $, $ 0 < c \w{\xi}^{1-m} \le b(\xi)  \in S^{1-m} $ 
	 and linear symplectic changes of  variables so that 
	\begin{equation}\label{prepform}
		 P(v,t,x,  D)b(D ) =  	a(v,t,x,  D)  Q(v,t,x, D) + R(v,t,x, D)
	\end{equation}
	where
	\begin{equation}\label{normform}
    Q(v,t,x, D)  = D_{t} + \sum_{j = 1}^{n-1} A_j(v,t,x, D_x) D_{x_j} + A_0(v,t,x,  D_x)    \qquad  (t,x) \in \br \times \br^{n-1}
 	\end{equation}
	Here $ a $, $A_j $ and  $ R $ are operators that depend $ C^\infty $ on $ v(x)$, $A_j  \in  C^\infty(\br,  S^0) $ is real valued when $ j > 0 $ and $ R  = R_0 + R_1 \in \Psi^1 $ where $ R_0 \in \Psi^{-1}$ and $ \wf R_1 \bigcap \Gamma_0 = \emptyset $,  $ \Gamma_0 = (v_0, x_0) \times \Gamma_{\xi_0, \varepsilon} $. The seminorms of $ A_{j} $, $ R $ and the constant~$ \varepsilon_0 $  only depend on the seminorms of  $ v $ and the coefficients of  $ P $.
	If $ P(v,t,x,D) $ is defined for real valued $ v \in C^\infty $ then $ Q $, $ R $ and $ A_j $ will also be defined for  real valued $v \in  C^\infty $.
\end{prop}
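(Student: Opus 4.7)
The strategy is to reduce $p_m$ to a factored form via a symplectic change and Malgrange preparation, then construct $a$, $b$, $A_j$ and $A_0$ explicitly from the factorization, matching two orders of the symbol expansion on $\Gamma_0$. First, since $p_m$ is of real principal type and hence $\partial_\xi p_m(v_0, x_0, \xi_0) \neq 0$, a linear symplectic transformation (cf.\ \cite[Th.~4.3]{ho:weyl}) reduces to $\partial_{\xi_n} p_m(v_0, x_0, \xi_0) \neq 0$. Relabeling $t = x_n$, $\tau = \xi_n$ so that $(t,x) \in \br\times\br^{n-1}$ with remaining frequencies $\xi' \in \br^{n-1}$, the implicit function theorem gives the zero set $\{p_m = 0\}$ locally as $\tau = -\lambda(v,t,x,\xi')$ with $\lambda$ real, smooth, homogeneous of degree $1$ in $\xi'$, and smooth in $v$. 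The Malgrange preparation theorem then yields, in a conic neighborhood of $(v_0, x_0, \xi_0)$, the factorization
\[
p_m(v,t,x,\tau,\xi') = e(v,t,x,\tau,\xi')\bigl(\tau + \lambda(v,t,x,\xi')\bigr),
\]
with $e$ homogeneous of degree $m-1$, real-valued, and non-zero at the base point.

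Next I would construct the operators. Set $b(\xi) = \w{\xi}^{1-m} \in S^{1-m}$, so $b \ge c\w{\xi}^{1-m}$ with $c > 0$. Pick $\psi(\xi) \in S^0$ supported in $\Gamma_{\xi_0,\varepsilon}$ with $\psi \equiv 1$ on a smaller cone around $\xi_0$, and let $\chi_\varrho$ be as in Remark~\ref{microrem}. Define
\[
a(v,t,x,\xi) = \psi(\xi)\chi_\varrho(\xi)\,e(v,t,x,\xi)\,b(\xi) \in S^0,
\]
real-valued and nonzero at $(v_0,x_0,\xi_0)$, after extending $e$ smoothly outside the factorization region. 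For $j = 1,\dots,n-1$ take the principal symbol of $A_j$ to be $a_j(v,t,x,\xi') = \psi(\xi)\chi_\varrho(\xi)\,\partial_{\xi_j}\lambda \in S^0$, which is real since $\lambda$ is; Euler's identity then gives $\sum_{j<n} a_j\xi_j = \lambda$ on the matching region, so $Q = D_t + \sum_{j<n} A_j D_{x_j} + A_0$ has principal symbol $\tau + \lambda$ there, for any $A_0 \in \Psi^0$ to be fixed below.

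Finally I would match two orders of the symbol on $\Gamma_0$ and split the remainder. Since $b$ depends only on $\xi$, Kohn--Nirenberg composition gives $Pb$ the exact total symbol $p(v,t,x,\xi)\,b(\xi) = p_m b + p_{m-1} b + \cdots$. The principal symbol of $aQ$ on $\Gamma_0$ equals $eb(\tau+\lambda) = p_m b$ by the factorization, so the order-$1$ terms match automatically. The order-$0$ part of the symbol of $aQ$ on $\Gamma_0$ takes the form $a\,\sigma(A_0) + L[a,\lambda]$ where $L[a,\lambda]$ is a fully determined first-order expression in $a$ and $\lambda$; since $a \neq 0$ on $\Gamma_0$ (shrinking $\varepsilon_0$ if needed) we solve $a\,\sigma(A_0) = p_{m-1}b - L[a,\lambda]$ for $\sigma(A_0)$ on $\Gamma_0$ and extend to a global $S^0$ symbol. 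Then $Pb - aQ$ has symbol of order $\le -1$ on $\Gamma_0$. Choosing $\Phi \in \Psi^0$ with symbol $1$ on $\Gamma_0$ supported in a slight enlargement, set $R_0 = \Phi(Pb - aQ) \in \Psi^{-1}$ and $R_1 = (I - \Phi)(Pb - aQ) \in \Psi^1$; then $\wf R_1 \cap \Gamma_0 = \emptyset$ as required.

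The main technical difficulty will be the joint $C^\infty$-dependence on the parameter $v$: the symplectic reduction, Malgrange preparation, algebraic solution for $\sigma(A_0)$, and microlocal cutoffs must each produce symbols whose seminorms are bounded by finitely many seminorms of $v$ and the coefficients of $P$, which in turn controls $\varepsilon_0$. The realness of $A_j$ for $j > 0$ follows from that of $\lambda$, and the real-operator conclusion propagates since every construction above yields real-valued symbols when the inputs are real.
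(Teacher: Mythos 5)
Your outline captures the characteristic-direction case, but it has two genuine gaps measured against what the proposition must deliver. First, the proposition is stated for an arbitrary direction $\xi_0$ on the sphere (and is later applied to \emph{every} cone of the partition of unity in Remark~\ref{microrem}), so it must also cover cones on which $p_m(v_0,x_0,\cdot)$ does not vanish at all. Your construction of $\lambda$ by the implicit function theorem and the factorization $p_m=e\,(\tau+\lambda)$ with $\lambda$ real presuppose that the zero set $\{p_m=0\}$ meets the cone and is a graph $\tau=-\lambda(v,t,x,\xi')$ there; in an elliptic cone there is no such zero set and this form of Malgrange preparation is not available. The paper treats this by a separate case: when $\Gamma_{\xi_0,2\varepsilon}$ misses $\Sigma=p_m^{-1}(0)$ it uses Euler's identity $\xi\cdot\partial_\xi p_m=mp_m$ to write $p_m=\sum_j A_j\xi_j$ with $|A|\cong|\xi|^{m-1}$ in both cases, and only afterwards applies Malgrange preparation to the reduced first-order symbol.

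Second, your $A_0$ is not of the required form. Solving $a\,\sigma(A_0)=p_{m-1}b-L[a,\lambda]$ on $\Gamma_0$ produces a symbol which in general depends on $\tau=\xi_1$, whereas \eqref{normform} demands $A_0=A_0(v,t,x,D_x)$, a $\Psi$DO in $x$ alone (this structure is exactly what Propositions~\ref{normeq} and~\ref{linest} need to treat $Q$ as an evolution equation in $t$). The paper removes the $\tau$-dependence by a second application of Malgrange (division), writing the subprincipal term as $q_0(\xi_1+r)+r_0(v,x,\xi')$ and absorbing $q_0Q$ into the elliptic factor, replacing $a$ by $aq(1+q_0)$; this step is absent from your argument. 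The same defect appears in your $A_j$, $j\ge1$: the cutoffs $\psi(\xi)\chi_\varrho(\xi)$ you insert depend on the full variable $\xi$, hence on $\tau$, so your $A_j$ are not operators in $x$ only; localization must be done in $\xi'$ (or by extension), not by full-cone cutoffs. Relatedly, $a=\psi\chi_\varrho\,e\,b$ vanishes identically outside the cone and for bounded $\xi$, contradicting the requirement $0\ne a\in S^0$, which is used later to invert $a_j$ microlocally ($a_ja_j^{-1}=\id+B_j$, $B_j\in\Psi^{-1}$); as in the paper, one must extend the elliptic factor to a globally nonvanishing symbol rather than cut it off. Your choice $b=\w{\xi}^{1-m}$ (versus the paper's $b=A_1(v_0,x_0,\xi)^{-1}$) is harmless, and your remark on $C^\infty$-dependence on $v$ is the right concern, but the two structural points above are needed to reach the stated normal form.
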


\begin{proof}
	For the proof, it is important that compositions of  operators that depend $ C^\infty $ on $ v $ also depend $ C^\infty $ on $ v $, see Lemma~\ref{contlem}.
	Let $ \Sigma = p_m^{-1}(0) \subset T^*\br^n$ and 
	let $ \xi_0\in \br^n  $ so that $|\xi _0 | = 1  $. If $ (v_0, x_0) \times\Gamma_{\xi_0, 2\varepsilon} \bigcap \Sigma  \ne \emptyset$ then, after a linear symplectic changes of  variables,  we find that $ |\partial_\xi p_m| \cong |\xi |^{m-1} $ in  $\Gamma_0 =  (v_0, x_0) \times \Gamma_{\xi_0, \varepsilon} $ for small enough $ \varepsilon $ since $ p_m  $ is of principal type. Then we have $ p_m = \sum_j A_j \xi_j = A\cdot \xi$ with $ |A | \cong | \xi |^{m-1}$ in $\Gamma_0 $. If   $ (v_0, x_0) \times\Gamma_{\xi_0, 2\varepsilon} \bigcap \Sigma  = \emptyset$ then we have $ |p_m| \cong |\xi |^{m} $ in $ \Gamma_{0} $ by homogeneity and compactness. Then we find by homogeneity that $ \xi\cdot \partial_\xi p_m = mp_m  $ in $ \Gamma_{0} $ thus  $ p_m = \sum_j A_j \xi_j$ with $ |A | \cong | \xi |^{m-1}$ in $ \Gamma_{0} $ in this case too. Thus $ \exists \, j $ so that $ |A_j | \cong | \xi |^{m-1}$ near the ray through $(v_0,x_0,\xi_0) $.  After an ON transformation  we obtain that $ |A_1 | \cong | \xi |^{m-1}$ in $ \Gamma_{0} $  by shrinking~$\varepsilon$. 
	
	Letting  $ b (\xi)  =  A_1(v_0,x_0, \xi)^{-1}$  near  $ \Gamma_{0} \bigcap \set{|\xi | \ge 1}$ we may extend $ b(\xi)  $ to a symbol in $ S^{1-m}$ so that $ b(\xi) \gs \w{\xi}^{1-m} $ modulo terms having wave front set outside~$ \ol \Gamma_0 $. Then $ P b(D) $  has symbol expansion with $ p_jb \in S^{j + 1 - m} $ and principal symbol $ p_m(v,x,  \xi) b({\xi})  =  \sum_j A_j(v,x,  \xi)b(\xi)  \xi_j \in S^1$, where $ A_j(v,x,  \xi)b(\xi)  \in S^0$.  Replacing $ A_jb $ with $ A_j  $ we find
	\[ 
	 P b(D) = \sum_{j = 1}^{n} A_j(v,x, D) D_{j} + A_0(v,x, D)
	 \]
	where $ A_0  \in \Psi^0$, $ A_1(v,x,  \xi)  \ne 0$  near $ \Gamma_0 $ and  $ A_j \in S^0$. 
	No we can extend $a = A_1  $  outside $ \Gamma_0 $ so that $0 \ne  a(v,x,  \xi) \in S^0$. 
	Replacing $ A_j $ with $ a^{-1}A_j $  we find that $ Pb(D) = a(v,x, D) Q  \in \Psi^{1}$ where the symbol of $ Q $ is equal to $ \xi_1 + \sum_{ j > 1}A_j(v,x,\xi)\xi_j  $ modulo $ S^{0} $ and terms having wave front set outside $ \ol \Gamma_0 $.
	
	To obtain that $ A_ j$ is independent of $ \xi_1$ for $ j > 0 $, we shall use the Malgrange preparation theorem. If  $ \xi = (\xi_1, \xi') $ we find by homogeneity for small enough $ \varepsilon_0 > 0 $ that
	\begin{equation}
	\xi_1 + \sum_{ j > 1}A_j(v,x,\xi)\xi_j = q(v,x,\xi)\left(\xi_1 + r(v,x,\xi') \right)	
	\end{equation} 
	in a conical neighborhood of $ \Gamma_0 $, where $0 < q \in S^0$ is  homogeneous of degree 0 and $ r  \in S^1$ is real and homogeneous of degree 1. Then we can extend $ q  $ to a positive homogeneous symbol by a cut-off.
	This replaces $ a$ by $ 0 \ne aq \in S^0$, and by using Taylor's formula we find that $r(v,x,\xi') = \sum_{j >1}r_j(v,x,\xi') \xi_j $ with $ r_j $ homogeneous of degree 0 in $ \xi' $. 
	This gives that $ Pb(D) =  aq(v,x,D) Q$   where $ Q $ is equal to~\eqref{normform} modulo $ \Psi^0 $  and terms having wave front set outside $ \ol \Gamma_0 $.
	Observe that the composition $ aq(v,x,D) $ with $ Q $ also gives lower order terms in $ \Psi^0 $ which can be included in $ A_0 $. 
	
	Now the term $ A_0 \in  \Psi^0$ of  $ Q $ can be replaced by $  aqR_0 $ modulo $ \Psi^{-1} $ where the symbol $ R_0 = A_0/aq \in S^0$. To  make the term $ R_0 $ independent of $ \xi_1$ we may use Malgrange division theorem and homogeneity for small enough $ \varepsilon_0 > 0$ to obtain that
	\begin{equation}
	R_0(v, x,\xi) = q_0(v,x,\xi)\left(\xi_1 + r(v, x,\xi') \right)	 + r_0(v, x,\xi')
	\end{equation}
	in a conical neighborhood of $ \Gamma_0 $, where $ q_0  \in S^{-1}$ and $ r_0 \in S^0 $ by homogeneity. Cutting of $ q_0 $ we may replace $ R_0(v,x,D) $ with $ q_0(v,x,D) Q + r_0(v,x,D_{x'}) $ modulo~$\Psi^{-1}$ near $ \Gamma_0 $. Since $ R_0 \cong (1 + q_0)R_0 $ modulo $\Psi^{-1}$, we obtain~\eqref{prepform}--\eqref{normform} with $ a $ replaced by $aq( 1 + q_0)$.
	Cutting off $ q_0 $ where $ |\xi | \gg 1 $ only changes the operator with terms in $ \Psi^{-\infty} $, but gives that $  1 + q_0  > 0$ making $ aq(1 + q_0 )\ne 0 $.  The composition of $ aq \in  \Psi^{0}$ with $ q_0 Q \in  \Psi^{0}$ will also give lower order terms in $ \Psi^{-1} $ which can be included in $ R $ together with any cut-off terms. Observe that the reductions in the proof hold for both real and complex valued parameters $ v $, but lower order terms may be complex. This gives the proposition  after putting $  t= x_1 $ and $ x = x' $.
\end{proof}

\begin{rem}\label{normrem} 
	Proposition~\ref{normalform} shows that there exists $ F_0 $ and $ F_1 \in I^0 $ so that $ F_0F_1 \cong F_1F_0 \cong \id $ modulo $ \Psi^{-2} $ and $ F_0PbF_1 $ is on the normal form~\eqref{prepform}. By multiplying with $ F_1 $ we obtain~\eqref{prepform} with $ a$ replaced by $F_1 a \in I^0 $, $ b $ replaced by $ bF_1 \in I^{1-m} $ which are elliptic, and $  R  $ replaced by $F_1 R \in I^1 $ modulo $ I^{-1} $, which is in $ I^{-1} $  at $ \Gamma_{0} $. 
\end{rem}

We say that $ E \in I^k $ in the open set $ \Omega \subset T^*\br^n $ if  $ E = E_0 + E_1 $ where $ E_1 \in I^k $ and $ \wf E_0 \bigcap \Omega= \emptyset $.
Proposition~\ref{normalform} shows that the linerarized equation $P (v,x,D)u = f  $ may after linear sympectic changes  of variables be microlocally reduced to the system $Q_j (v,x, D)u_j   \cong a_j^{-1}f_j $
where $f_{j} = \varphi_{j} f$ and $ u \cong \sum_j b_ju_j $ with $  b  \in I^{1-m} $ and $  \varphi_{j}$ given by Remark~\ref{microrem}.  Observe that  $ u_j $ has to be microlocalized.

\section{Solutions of the Microlocalized Equation}

Next, we shall solve the microlocalized equations $ Q_ju_j = a_j^{-1} f_{j} = a_j^{-1}  \varphi_{j}f  $. Here  $  \varphi_{j}$ is given by Remark~\ref{microrem}, $ Q_j $ is given by~\eqref{normform} with  $ a_j \in I^{0} $ given by Proposition~\ref{normalform}, but we shall treat the term $R $ as a perturbation.  
In the case when $ A_j \equiv 0 $, we would then find that  $Q_j u_j  \cong D_{t} u_j   = a_j^{-1} f_{j} $, which has the approximate solution $ u_j  \cong  \int_0^{t}a_j^{-1} f_{j} \, dt $. By using Fourier integral operators one can reduce to this case. 

As before, we shall denote by $ I^k $ the classical Fourier integral operators  of order $ k $ with homogenous phase functions and classical symbol expansions, but now depending $ C^\infty $ on~$ v $.
But we shall also use operators $F(t) \in C^\infty(\br, I^{k}) $ so that $F(t) \in I^k $ is FIO in  $  x$ depending $ C^\infty $ on~$ t $ and~$ v $ for $ (t,x) \in \br \times \br^{n-1}$.  Observe that $ \Psi$DO of order $ k$ in $ x $ depending $ C^\infty $ on~$ t $ and~$ v $ are  also in $C^\infty(\br, I^{k})$ and that $C^\infty(\br, I^{k}) \subset I^k$. By multiplying  $ I^k $ by $ I^m $ we obtain operators in $ I^{k + m} $ by Remark~\ref{fiorem}. 

As before, we shall use ON coordinates  $ (t,x) \in \br \times \br^{n-1}$ and suppress the dependence on  $ v $. But the operators will depend $ C^\infty $  on  $ v $ and $ w $ having symbols and phase functions that are uniformly bounded  if $ v \in C^\infty$  and $ w \in \br^m$ are bounded. 

.

\begin{prop}\label{normeq}
Assume that $ Q = D_{t} + a_1(t,x,D_{x}) + a_0(t,x,D_{x})  $ depends $ C^\infty $ on $ v \in C^\infty$,  where $ a_1 \in  C^\infty(\br, S^1) $ is real and homogeneous of degree 1 and $ a_0 \in  C^\infty(\br,  S^0) $. Then there exists elliptic Fourier integral operators $ F_0(t) $ and  $ F_1(t) \in C^\infty(\br, I^{0}) $ such that $ F_0(t)  F_1(t) \cong \id$ and $ Q F_0(t)  \cong F_0(t)  D_t $  modulo $C^\infty(\br, I^{-1})$.
If $ f \in C^\infty_0 $ then we have that
\begin{equation}\label{soleq}
u(t,x) =i F_0(t) \int_0^t F_1(s) f(s,x)\, ds  = \mathbb{ F}f(t,x) 
\end{equation}
solves the initial value problem
\begin{equation}\label{normequation}
Q u  = (\id + S)f  \in C^{\infty} \qquad u(0,x)\equiv 0 
\end{equation}
where $ S \in C^\infty(\br, I^{-1})$ and $ \mathbb{ F} \in I^0 $. 
Here $ F_0(t) $, $ F_1(t) $ and $ \mathbb{ F} $ depend $ C^\infty $ on $ v\in C^\infty $ and have wave front sets close to the diagonal for $ |t| \ll 1 $. In fact, the canonical transformations given by $ F_0(t) $ and  $ F_1(t) $ maps bicharacteristics of $D_t + a_1 $ to $t $ lines and vice versa. 
\end{prop}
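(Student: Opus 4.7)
The plan is a classical FIO conjugation argument. I would construct a $t$-parameter family of elliptic FIOs $F_0(t)$ such that $QF_0(t)\cong F_0(t)D_t$ modulo $C^\infty(\br,I^{-1})$, then invert to obtain a parametrix $F_1(t)$, and finally define $\mathbb{F}f$ essentially as $F_0$ applied to a $t$-integral of $F_1 f$. With $F_0$ interpreted as a family of FIOs in $x$ acting on functions of $(t,x)$, the desired conjugation identity is equivalent to the operator equation $\tfrac{1}{i}\partial_t F_0(t) + \bigl(a_1+a_0\bigr)(t,x,D_x)F_0(t)\in C^\infty(\br,I^{-1})$ with $F_0(0)=\id$, so the task reduces to solving this family-of-FIOs transport problem.

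For the construction I would write $F_0(t)u(x)=(2\pi)^{-(n-1)}\int e^{i\phi(t,x,\xi)}A(t,x,\xi)\wh u(\xi)\,d\xi$ and expand $A\sim\sum_{k\ge 0}A_k$, $A_k\in S^{-k}$, with $\phi(0,x,\xi)=x\cdot\xi$ and $A_0(0,\cdot)\equiv 1$. At top order $S^1$ one obtains the Hamilton--Jacobi equation
\[
\partial_t\phi(t,x,\xi) + a_1(t,x,\partial_x\phi(t,x,\xi)) = 0,
\]
which admits a real, $\xi$-homogeneous solution for $|t|\ll 1$ by the method of characteristics, since $a_1$ is real and $1$-homogeneous; the characteristic curves are precisely the bicharacteristics of $D_t+a_1$, and the induced canonical relation $\chi_t$ maps them to $t$-lines, with $\partial_x\partial_\xi\phi=\id$ at $t=0$. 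The next orders yield transport ODEs along these bicharacteristics for the $A_k$, solvable globally along the flow, with $A_0$ nonvanishing near the diagonal. After cutoffs outside a small conic neighborhood of the diagonal canonical relation, this gives $F_0(t)\in C^\infty(\br,I^0)$ satisfying the conjugation identity modulo $C^\infty(\br,I^{-1})$.

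Ellipticity of $F_0(t)$ then gives a parametrix $F_1(t)\in C^\infty(\br,I^0)$ with $F_0 F_1\cong \id\cong F_1 F_0$ modulo $C^\infty(\br,I^{-1})$ by the standard construction based on the inverse canonical transformation. To verify \eqref{soleq}--\eqref{normequation}, set $R_0 := QF_0 - F_0 D_t\in C^\infty(\br,I^{-1})$ and $g(t,x) := i\int_0^t F_1(s)f(s,x)\,ds$, so $D_t g(t,\cdot)=F_1(t)f(t,\cdot)$; then
\[
Qu = Q\bigl(F_0(t)g(t,\cdot)\bigr) = F_0(t)D_t g + R_0 g = F_0(t)F_1(t)f + R_0 g = f + Sf,
\]
where $Sf:=\bigl(F_0(t)F_1(t)-\id\bigr)f + R_0 g$ assembles into an operator $S\in C^\infty(\br,I^{-1})$ (composition of $I^{-1}$ and $I^0$ with the smoothing $t$-integration stays in $I^{-1}$), and $u(0,\cdot)=0$ by construction. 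The $C^\infty$ dependence on $v$ is inherited from $a_0,a_1$ at every step via Lemma~\ref{contlem} and Remark~\ref{fiorem}, and $\wf\mathbb{F}$ is close to the diagonal for $|t|\ll 1$ because $\chi_t$ is then a small perturbation of $\id$.

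The main obstacle is arranging the FIO representation globally in $t$: the Hamiltonian flow of $a_1$ may develop caustics, so a single nondegenerate generating function $\phi$ only exists for $|t|\ll 1$. The proposition accommodates this by restricting the wave-front estimate to small $|t|$; for the global membership $F_0(t),F_1(t)\in C^\infty(\br,I^0)$ one cuts off the symbols and extends outside the short-time regime, which is all that the subsequent short-time microlocal application to Theorem~\ref{appthm} requires.
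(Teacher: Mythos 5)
Your proposal is correct and follows essentially the same route as the paper: the paper likewise invokes the classical conjugation of $Q$ to $D_t$ by elliptic FIOs built from the Hamilton--Jacobi equation for the phase and transport equations for the amplitude (noting the $C^\infty$ dependence on $v$), and then verifies \eqref{soleq}--\eqref{normequation} by the same computation $Qu = F_0(t)F_1(t)f + S_0\int_0^t F_1(s)f\,ds = f + Sf$ with $S \in C^\infty(\br, I^{-1})$. Your additional remarks on caustics and the short-time restriction are consistent with the paper, which only claims the wave front set statement for $|t|\ll 1$.
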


\begin{cor}\label{locrem}
If $ c_j \in \Psi^{k} $, $ j = 1,\ 2 $, and $ \supp c_1 \bigcap  \supp c_2  =  \emptyset$, then  \/ $ c_1F_0(t)F_{1}(s)c_2 \in I^{-\infty}$
having a smooth kernel  for small enough $| s| $ and $| t| $.
\end{cor}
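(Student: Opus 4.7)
The plan is to trace the canonical relation of $c_1 F_0(t) F_1(s) c_2$ and show that, for small $|s|,|t|$, the disjointness of $\supp c_1$ and $\supp c_2$ forces this relation to be empty, so the composition is smoothing.

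First I would identify the canonical data of $F_0(t) F_1(s)$. By Proposition~\ref{normeq}, $F_0(t)$ is an elliptic FIO on $\br^{n-1}$ depending smoothly on $(v,t)$, generated by a homogeneous canonical transformation $\chi_t$ on $T^*\br^{n-1}\setminus 0$; since $F_0(t) F_1(t) \cong \id$ modulo $C^\infty(\br, I^{-1})$, the canonical transformation of $F_1(s)$ is $\chi_s^{-1}$. The statement that $F_0(t)$ and $F_1(t)$ have wave front sets close to the diagonal for $|t|\ll 1$ means $\chi_t$ is close to the identity on $S^*\br^{n-1}$ for small $|t|$. Hence $F_0(t)F_1(s)$ is an elliptic FIO with canonical relation equal to the graph of $\kappa_{t,s} = \chi_t \circ \chi_s^{-1}$, and $\kappa_{t,s}$ converges uniformly to the identity on compact subsets of $S^*\br^{n-1}$ as $(s,t) \to (0,0)$.

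Next, by the standard FIO/$\Psi$DO composition calculus, the canonical relation of $c_1 F_0(t) F_1(s) c_2$ is the subset of the graph of $\kappa_{t,s}$ whose left projection lies in $\supp c_1$ and whose right projection lies in $\supp c_2$, i.e., the pairs $(\kappa_{t,s}(y,\eta),(y,\eta))$ with $(y,\eta) \in \supp c_2$ and $\kappa_{t,s}(y,\eta) \in \supp c_1$. The hypothesis $\supp c_1 \cap \supp c_2 = \emptyset$, interpreted as disjointness of conic microsupports, gives disjoint closed subsets of the cosphere bundle; on any compact piece of the base they are separated by a positive distance $\delta_0$, while $\kappa_{t,s}$ displaces each point in $S^*\br^{n-1}$ by less than $\delta_0$ once $|s|,|t|$ are sufficiently small. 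Thus $\kappa_{t,s}(\supp c_2) \cap \supp c_1 = \emptyset$, the canonical relation is empty, and $c_1 F_0(t) F_1(s) c_2 \in I^{-\infty}$ with a kernel in $(x,y)$ jointly smooth in $(s,t)$ by the smooth parameter dependence of $F_0,F_1$.

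The main technical nuisance is that $\supp c_1, \supp c_2$ are conic and in general not $\xi$-compact, so the uniform displacement estimate for $\kappa_{t,s}$ must be carried out in the cosphere bundle and extended by homogeneity; beyond that the argument is a routine application of the wave-front calculus.
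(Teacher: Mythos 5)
Your argument is correct and is essentially the paper's own (largely implicit) justification: the corollary is deduced directly from the final statement of Proposition~\ref{normeq} that $F_0(t)$, $F_1(s)$ have wave front sets close to the diagonal for small $|t|,|s|$ (their canonical transformations being close to the identity), which is exactly what you spell out via $\kappa_{t,s}=\chi_t\circ\chi_s^{-1}$ and the disjointness of the supports of $c_1$ and $c_2$. The only caveat, harmless in the paper's local setting, is that uniform smallness of the displacement versus the separation of the supports should also be controlled in the base variable, not only in the conic $\xi$-direction.
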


\begin{proof}
It is a classical result that there exists elliptic FIO $ F_0(t) $ and  $ F_1(t) \in I^0$  with the properties in the proposition. 
The construction of the homogeneous phase function of the FIO involves solving the Hamilton--Jacobi equations, which depend on the derivatives of the principal symbol $ \tau + a_1 $ of $ Q $ and thus depend $ C^\infty $ on $ v $. Then the amplitude is given by the transport equations which also depend on the lower order term $ a_0 $ of  $ Q $ modulo terms in $C^\infty(\br, S^{-1}) $ and thus depend $ C^\infty $ on $ v $. For the approximate inverse, one takes the phase function for the inverse canonical relation and the inverse amplitude, which also depend $ C^\infty $ on $ v $. 

If $ f \in C^\infty_0 $ and $ u = i F_0(t) v$ with $ v = \int_0^t F_1(s) f(s,x)\, ds $  as in~\eqref{soleq}, then we find 
\begin{multline}
Qu = iQ F_0(t)v = (iF_{0}(t)D_t + S_0 )v =  F_0(t)F_{1}(t) f  +  S_0 v  \\
= f  + S_1f  + S_0 v = f  + S_1f +  S_0\int_0^t F_1(s) f \, ds = f + S f
\end{multline}
	where $ S_0 $, $ S_1 $ and $ S \in C^\infty(\br, I^{-1})$.
\end{proof}

Now  by integrating in $\xi $ we find that $ S\in I^{-\infty} $ has smooth kernel $ S(x,y) $. Then the term $ Sf (x) = \int S(x,y) f(y ) \, dy$ can be made small if $ f $ has support in a sufficiently small neighborhood of  $ x_0 $.   In fact, let $ \phi_\delta(x) = \phi((x-x_0)/\delta) $ where $ 0 < \delta \le 1 $ and $\phi \in C^\infty_0(\br^n) $ such that $ 0 \le \phi \le 1 $, $ \phi $ has support where $ | x| < 2 $ and is equal to 1 when $ | x| \le 1$, so that $\phi(x/\delta) \in C_0^\infty(B_{x_0, 2\delta}) $  if  $ B_{x_0, \delta} = \set{x:  |x - x_0 | \le \delta} $ and $\phi(x/\delta)  = 1  $ in $ B_{x_0, \delta} $.

\begin{lem}\label{estlem}
	Let $ S(x,y) \in C^\infty $ and 
	$S_\delta(x,y) = \phi_\delta(x) S(x,y) \phi_\delta(y)  \in C^\infty_0(B_{x_0, 2\delta}\times  B_{x_0, 2\delta})$.
	The mapping $S_\delta:   C^\infty \mapsto  C_0^\infty(B_{x_0, 2\delta}) $ is given by $ S_\delta f(x) = \int S_\delta(x,y) f(y ) \, dy$, and for $ f \in  C^\infty_0(B_{x_0, \delta}) $ we have $S_\delta f(x) = Sf(x)$ when $ |x| \le \delta $.
	For $ \delta  $ small enough,  $ \id + S_\delta $  has the inverse  $ (\id + S_\delta)^{-1}  = \sum_{j= 0}^\infty (-S_{\delta})^j \cong \id $
	modulo operators with kernels in $C^\infty_0(B_{x_0, 2\delta}\times  B_{x_0, 2\delta}) $. 
\end{lem}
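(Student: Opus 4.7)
The proof splits into three essentially independent parts, the first two of which are routine. I would dispatch claims (1) and (2) by unwinding definitions: since $\phi_\delta(x)$ is supported in $B_{x_0,2\delta}$, the kernel $S_\delta(x,y)$ vanishes for $x \notin B_{x_0,2\delta}$, so $S_\delta f \in C^\infty_0(B_{x_0,2\delta})$ for any $f \in C^\infty$, and smoothness in $x$ follows by differentiating under the integral, justified by the smoothness of $S$ and the compact support of $y \mapsto \phi_\delta(y) f(y)$. For (2), if $f \in C^\infty_0(B_{x_0,\delta})$ and $|x-x_0| \le \delta$, then $\phi_\delta(x) = 1$ and $\phi_\delta(y) f(y) = f(y)$, whence $S_\delta f(x) = \int S(x,y) f(y)\,dy = Sf(x)$.

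For the main assertion, the invertibility of $\id + S_\delta$, I would first establish an $L^\infty \to L^\infty$ contraction estimate. Since $S \in C^\infty$ is bounded on the fixed compact set $B_{x_0,2} \times B_{x_0,2}$,
\[
|S_\delta f(x)| \le \Bigl(\sup_{B_{x_0,2}\times B_{x_0,2}}|S|\Bigr)\cdot |B_{x_0,2\delta}| \cdot \|f\|_\infty \le C\delta^n \|f\|_\infty,
\]
so $\|S_\delta\|_{L^\infty \to L^\infty} < 1$ once $\delta$ is sufficiently small, and the Neumann series $(\id + S_\delta)^{-1} = \sum_{j \ge 0}(-S_\delta)^j$ converges in $B(L^\infty)$.

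The hard part is showing that $T_\delta := \sum_{j \ge 1}(-S_\delta)^j$ is an integral operator with kernel in $C^\infty_0(B_{x_0,2\delta} \times B_{x_0,2\delta})$, i.e., that the series converges in every $C^k$ norm of the kernel rather than merely in operator norm. The $j$-th iterate has kernel
\[
K_j(x,y) = \int S_\delta(x,z_1) S_\delta(z_1,z_2)\cdots S_\delta(z_{j-1},y)\,dz_1\cdots dz_{j-1},
\]
automatically supported in $B_{x_0,2\delta} \times B_{x_0,2\delta}$ via the two outer cutoffs. On this support one has $|\partial_x^\alpha \partial_y^\gamma S_\delta| \le C_{\alpha,\gamma}\,\delta^{-|\alpha|-|\gamma|}$ uniformly for $\delta \le 1$; since only the first and last factors in the composition depend on $x$ and $y$, derivatives land only there, while the $j-1$ intermediate integrations contribute volume factors $(C\delta^n)^{j-1}$, so
\[
|\partial_x^\alpha \partial_y^\gamma K_j(x,y)| \le C_{\alpha,\gamma}\,\delta^{-|\alpha|-|\gamma|}\,(C\delta^n)^{j-1}.
\]
Once $C\delta^n < 1/2$, a threshold depending only on $n$ and $\sup |S|$ and not on $\alpha,\gamma$, this is summable in $j$ for every fixed $(\alpha,\gamma)$, so the series converges in every $C^k$ norm. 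The subtlety, and the step I view as the main obstacle, is precisely this balance: differentiating the cutoffs $\phi_\delta$ at the endpoints costs $\delta^{-|\alpha|-|\gamma|}$, and one must observe that the inner cutoffs carry no free variable and are never differentiated, so their only contribution is the volume gain $\delta^n$ per intermediate integration, which eventually dominates any fixed polynomial loss in $\delta$.
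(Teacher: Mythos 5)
Your proposal is correct and follows essentially the same route as the paper: an $L^\infty$ bound $\|S_\delta f\|_\infty \lesssim \delta^n\|S\|_\infty\|f\|_\infty$ making the Neumann series geometrically convergent, together with the observation that derivatives of the iterated kernels fall only on the outer cutoff factors and cost fixed powers of $\delta^{-1}$, which the volume gains $(C\delta^n)^{j-1}$ dominate, giving convergence of the kernels in $C^\infty_0(B_{x_0,2\delta}\times B_{x_0,2\delta})$. Your write-up is in fact somewhat more explicit than the paper's brief remark that differentiation "only gives factors $O(\delta^{-1})$".
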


\begin{proof}
	We may assume that $x_0 = 0  $, clearly $S_\delta f(x) = Sf(x)$ if $ \phi_\delta f = f$ and $ \phi_\delta(x) = 1 $.
	If $ f \in C^\infty $  then $ L^\infty $ norm is  $ \mn{S_\delta f}_\infty \le c_n 2^n\delta^{n }\mn{S}_\infty \mn{f}_\infty $.
	By induction we get 
	\begin{equation}
	\mn{S_{\delta}^{j}f}_\infty \le c_n2^n\delta^{n }\mn{S_\delta}_\infty \mn{S^{j-1}_{\delta}f}_\infty \\
	\le c_n^{j}2^{jn}\delta^{jn }\mn{S}^j_\infty \mn{f}_\infty \qquad j > 1
	\end{equation}
	where the kernels of $ S_{\delta}^{j} $ are in $ C^\infty_0(B_{0, 2\delta}\times  B_{0, 2\delta}) $. Thus the series $ \sum_{j= 0}^\infty (-S_{\delta})^j  $ converges in $ L^\infty $ if $ c_n 2^n\delta^{n }\mn{S}_\infty < 1 $. 
	Derivation of  the terms in the series will only give factors $ O(\delta^{-1}) $ so the convergence is in $C^\infty_0(B_{0, 2\delta}\times  B_{0, 2\delta}) $. 
	Then the  inverse $(\id + S_\delta)^{-1} =  \id + \sum_{j= 1}^\infty (-S_\delta)^j \cong \id  $ modulo operators with  kernels  in $C^\infty_0(B_{x_0, 2\delta}\times  B_{x_0, 2\delta}) $.
\end{proof}

\section{A Calculus Lemma}

The approximate solution  $ u $ in~\eqref{soleq} depends $ C^\infty $ on the data $ f $ and  $ v $, but we shall need stronger estimates. For that we shall use the $ L^2 $ Sobolev norms:
\begin{equation}\label{normdef2}
\mn{\varphi}^2_{(k)} = \mn{\w{D}^k\varphi}^2 \qquad \varphi \in C^\infty_0  \qquad \forall \, k\in \br 
\end{equation}
Then the continuity of  $ a(v,x,D) \in \Psi^m $ depend uniformly on $ v\in C^\infty $ in theses spaces.

\begin{lem}\label{contlem}
	If  $ a(v,x, D) \in \Psi^{m_1}$ and $ b(v,x, D) \in \Psi^{m_2}$  depend  $   C^\infty  $ on $ v(x) $ then we find that  $a(v,x,D) b(v,x,D) \in \Psi^{m_1 + m_2}$ also depends  $   C^\infty  $ on $ v(x) $. 
	There exists $ \ell \in \bn $ so that for  any $ a(v,x, D)  \in \Psi^{0}$ depending  $   C^\infty  $ on $ v(x) \in C^\infty $, and for any $ k \in \br$  there exists $ C_{k}(t) \in C^\infty(\br_+) $ so that
	\begin{equation}\label{est1}
	\mn{a(v,x,D)\varphi}_{(k)} \le C_{k}(\mn{v}_{(\ell)})\mn{\varphi}_{(k)} \qquad \forall \, \varphi \in C^\infty_0
	\end{equation}
	There exists $ \ell \in \br $ so that for any  $ a(v,x, D) \in \Psi^{m_1}$ and $ b(v,x, D) \in \Psi^{m_2}$  depending  $   C^\infty  $ on $ v(x) $, and for any $ k  \in \br$ there exists $ C_{k}(t) \in C^\infty(\br_+) $ so that
	\begin{equation}\label{est2}
	\mn{\, [a(v,x,D), b(v,x,D)]\varphi}_{(k)} \le C_{k}(\mn{v}_{(\ell)})\mn{\varphi}_{(k +m_1+m_2-1)}  \qquad \forall \, \varphi \in C^\infty_0
	\end{equation}
	There exists $ \ell \in \br $ so that for  any $ a (v,x,D)  \in \Psi^{m}$ depending  $   C^\infty  $ on $ v(x) $ having real valued symbol modulo $ S^{m-1} $, and for any $ k  \in \br$ there exists $ C_{k}(t) \in C^\infty(\br_+) $ so that
	\begin{equation}\label{est3}
	\mn{\im a(v,x,D)\varphi}_{(k)} \le C_{k}(\mn{v}_{(\ell)})\mn{\varphi}_{(k+m-1)} \qquad \forall \, \varphi \in C^\infty_0
	\end{equation}
	where $ 2i \im  a(v,x,D) =  a(v,x,D) -  a^*(v,x,D)  $ depends $ C^\infty $ on $ v $. Here the functions $ C_k(t) $ only depend on the seminorms of the symbols.
\end{lem}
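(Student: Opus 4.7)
The plan is to deduce all four statements from the Kohn--Nirenberg symbol calculus together with the standard $L^2$--boundedness theorem for pseudodifferential operators (Calder\'on--Vaillancourt, or for classical symbols the usual $L^2$--continuity of order zero operators). The key observation is that in each of the asymptotic expansion formulas of the calculus, the symbol of the output operator at every order is a finite sum of products of $x$ and $\xi$ derivatives of the input symbols. Consequently every Fr\'echet seminorm of the output symbol is estimated by finitely many Fr\'echet seminorms of the input symbols. Since by assumption each symbol seminorm of $a(v,x,\xi)$ is bounded by a finite number of $C^\infty$ seminorms of $v$, i.e.\ by $\mn{v}_{(\ell)}$ for some $\ell$, this property propagates through compositions, commutators and adjoints. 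This immediately gives the first claim: the composition formula
\[
(a\circ b)(v,x,\xi) \sim \sum_\alpha \frac{1}{\alpha!}\partial_\xi^\alpha a(v,x,\xi)\,D_x^\alpha b(v,x,\xi)
\]
yields a symbol in $S^{m_1+m_2}$ whose seminorms are controlled by finitely many seminorms of $a$ and $b$, hence by finitely many seminorms of $v$, so $a(v,x,D)b(v,x,D)\in\Psi^{m_1+m_2}$ depends $C^\infty$ on $v$.

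For \eqref{est1}, I would first treat the case $k=0$. The $L^2$--boundedness theorem states that there are $N\in\bn$ and $C>0$ so that for any $a(x,\xi)\in S^0$
\[
\mn{a(x,D)\varphi} \le C \sup_{|\alpha|+|\beta|\le N}\sup_{x,\xi}\w{\xi}^{|\alpha|}\left|\partial_\xi^\alpha\partial_x^\beta a(x,\xi)\right|\,\mn{\varphi}.
\]
Applied to $a(v,x,\xi)$, the right-hand side is bounded by $C_0(\mn{v}_{(\ell)})\mn{\varphi}$ for some $\ell$ and some $C_0\in C^\infty(\br_+)$. For general $k\in\br$, I would conjugate: $\w{D}^k a(v,x,D)\w{D}^{-k}\in\Psi^0$, again with symbol seminorms controlled by $\mn{v}_{(\ell)}$ via the composition formula, and apply the case $k=0$ to $\w{D}^k\varphi$.

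For \eqref{est2}, the composition formula gives that the subprincipal terms cancel, so $[a(v,x,D),b(v,x,D)]\in\Psi^{m_1+m_2-1}$ with symbol seminorms again bounded by finitely many seminorms of $v$. Writing
\[
[a(v,x,D),b(v,x,D)] = c(v,x,D)\w{D}^{\,m_1+m_2-1}
\]
with $c\in\Psi^0$ depending $C^\infty$ on $v$, \eqref{est2} follows from \eqref{est1} applied to $c$. Similarly, the adjoint formula gives
\[
a^*(v,x,\xi) \sim \sum_\alpha \frac{1}{\alpha!}\partial_\xi^\alpha D_x^\alpha \overline{a(v,x,\xi)},
\]
so if $a\in S^m$ is real modulo $S^{m-1}$ then $a-a^*\in S^{m-1}$, giving $\im a(v,x,D)\in\Psi^{m-1}$ with the desired uniform dependence on $v$; then \eqref{est3} follows from \eqref{est1} exactly as \eqref{est2} did.

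The only step that requires real care is the propagation of the $C^\infty$ dependence on $v$ through the asymptotic expansions, because one has to verify that when remainders are absorbed the resulting bounds still involve only finitely many seminorms of $v$. This is the main bookkeeping obstacle, and is handled by the standard argument that the $N$-th remainder in the Kohn--Nirenberg expansion is given by an oscillatory integral whose seminorms, for every fixed order of differentiation, are controlled by finitely many symbol seminorms of the input, hence by finitely many seminorms of $v$. The constants $C_k(t)$ so obtained depend only on the symbol seminorms and on the constants in the $L^2$--boundedness theorem.
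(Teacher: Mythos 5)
Your proposal is correct and follows essentially the same route as the paper: propagate the $C^\infty$ dependence on $v$ through the composition and adjoint calculus via seminorm continuity, use the $L^2$-boundedness of order-zero operators for $k=0$, reduce general $k$ by conjugation with $\w{D}^{k}$, and deduce \eqref{est2} and \eqref{est3} from the calculus together with \eqref{est1}. The only cosmetic difference is that you factor the commutator as $c(v,x,D)\w{D}^{m_1+m_2-1}$ while the paper bounds its symbol seminorms directly, and the paper is slightly more explicit that the number $\ell$ of seminorms of $v$ needed is independent of $k$ and of the orders $m_j$.
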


\begin{proof}
	First we note that by definition any seminorm of order $ k \in \bn$ of $ a(v,x, \xi) \in  S^m $  is bounded  by $ \mn{v}_{C^k} $ for some $k \in \br $. By the Sobolev embedding theorem, the $ C^k $ norm of $ v $ can be bounded by the norm $ \mn{v}_{(k + s)} $ with $ s > n/2 $.
	
	If $ a(v,x,\xi)  \in S^{m_1}$ and $ b(v,x,\xi) \in S^{m_2}$ then $ a(v,x,D) b(v,x,D) = c(v,x,D)$ is given by
	\begin{equation}\label{compform}
	c(v,x, \xi) = e^{i\w{D_{ \xi}, D_{y}}}a(v,x,\xi)b(v,y, \eta)\restr{\substack{y = x \\ \eta =  \xi}}
	\end{equation}
	The mapping $ a, b  \mapsto c $ is weakly continuous on the symbol classes $ S^{m}$ so that any semi\-norm of $ c $ only depends on some seminorms of $ a $ and $ b $, see~\cite[Th.\ 18.4.10$ ' $]{ho:yellow}. 
	(Weak continuity means that the restriction to a bounded set is continuous in the $ C^\infty $ topology, see~\cite[Def.\ 18.4.9]{ho:yellow}.) 
	Thus if $a(v,x,D)  $ and $ b(v,x,D) $ depend  $   C^\infty  $ on $ v \in C^\infty $ then $ c(v,x,D) $ also does.
	Observe that the number of seminorms that is needed does not depend on the symbol classes $ S^{m_j} $. 
	
	We may reduce the estimate~\eqref{est1} to the case $ k = 0 $ by replacing $ a(v,x,D) $ with  $A(x,D) = \w{D}^{k}a(v(x),x,D)\w{D}^{-k} \in \Psi^{0} $ and $ \varphi $ with $ \w{D}^{k}\varphi $. Here the symbol expansion of 
	$$ A(x, \xi) \cong a(v(x),x,\xi) + k D_x a(v(x),x,\xi) \xi \w{\xi}^{-2} + \dots  + A_j(x, \xi) + \dots 
	$$  
	where the term $ A_j(x, \xi) \in S^{-j} $ has the factor $ D^j_x a(v(x),x,\xi) $, $ \forall \, j $. The error term for the expansion $ \sum\limits_{j = 0}^{n} A_j(x,D)$ is $ r_n(x,D) \in \Psi^{-n-1} $. This operator has a $ C^0 $ kernel bounded by $ \mn{v}_{C^{n+1}} $ which gives a bound of $r_n(x,D)  $ on $ L^2 $. The $ L^2 $ norm of $ A_j(x,D) $ is bounded by a  fixed seminorm of $ A_j(x,\xi) $, $ \forall\  j$, see~\cite[Th.\ 18.6.3]{ho:yellow}.  This seminorm in turn depends on some seminorm of $ a(v,x,\xi) $ which gives~\eqref{est1}   for some $ C_k(t) \in C^\infty(\br_+)$. 
	
	Any seminorm of the symbol of the commutator $[a(v,x,D), b(v,x,D)]  \in S^{m_1+m_2 -1}$ depends on the same seminorm of the symbols of the compositions $ a(v,x,D)b(v,x,D) $ and $ b(v,x,D)a(v,x,D) $. These seminorms in turn depend on some  seminorms of $  a(v,x,\xi) $ and $  b(v,x,\xi) $. Thus we obtain~\eqref{est2} from~\eqref{est1} for some $ C_k(t) \in C^\infty(\br_+)$.
	
	If $ a   \in S^{m}$ then the adjoint $ a^*(v, x,D)  $ is given by 
	\begin{equation}
	a^*(v, x,\xi)   = e^{i\w{D_\xi, D_x}}\ol a(v,x,\xi)
	\end{equation}
	which is weakly continuous in the symbol class $ S^{m} $ by~\cite[Th.\ 18.1.7]{ho:yellow}. If $ a  $ is real modulo $ S^{m-1} $ then $\im a(v, x,D) \in \Psi^{m-1}  $. Thus any seminorm of the symbol of $ \im a(v, x,D) $ is bounded by some seminorms of $ a(v, x,\xi) $, giving ~\eqref{est3}  for some $ C_k(t) \in C^\infty(\br_+)$. 
\end{proof}

\begin{rem}\label{fiorem}
	The results of Lemma~\ref{contlem} also holds for $a(v,x,D) \in  \Psi^m $ depending $ C^\infty $ on~$ v $ composed by  $F \in  I^k $ depending $ C^\infty $ on $ v $, e.g., the FIO given by Proposition~\ref{normeq}. Operators in $ I^{-\infty} $ have smooth kernels which are  $ C^\infty $ functions of  $ v $.
\end{rem}

In fact, Theorem 9.1 in \cite{ho:weyl} shows that the conjugation of $ \Psi$DO  with FIO gives symbol expansions similar to~\eqref{compform} after change of variables, see for example (9.2)$ '' $ 
in \cite{ho:weyl}. This result is about Weyl operators, but by Theorem 18.5.10 in \cite{ho:yellow} it can be extended to operators having the Kohn-Nirenberg quantization. This gives a calculus with symbol expansions of classical homogeneous FIO with homogeneous phases and symbols, see pages~441--442 in~\cite{ho:weyl}. For example, if $ a \in \Psi^{m}  $ and $ F \in I^k$  then we have $ \mn{aF \varphi}^2 = \w{F^*a^* a F \varphi, \varphi} $ where $ F^*a^* a F = b\in \Psi^{2(m+k)} $, and a similar result holds for $ \mn{Fa  \varphi}^2$.

For $ S \in  I^{-\infty} $ the $ C^\infty $  dependence means that for any $ k \in \br$ we have  $ S \in  I^{-k} $ depending $ C^\infty $   on $ v $. Since the kernel is obtained by taking the Fourier transform in $ \xi $ of the symbol, we find that the kernel of $ S $ is smooth and is a  $ C^\infty $  function of  $ v $.

\section{The Microlocal Estimate}

Next, we are going to prove estimates for the microlocalized operators. 
Then we will use ON coordinates  $ (t,x) \in \br \times \br^{n-1}$ and  for $ k \in \br $ and $  T > 0$ define the local norms
\begin{equation}\label{normdef1}
\mn{\varphi}^2_{k,T} = \int_{|t| \le T} \mn{\varphi}_{k}^2(t) \, dt\qquad \varphi \in C^\infty_0
\end{equation}
and  $ \mn{\varphi}_{k,j,T} = \mn{ \w{D_t}^j \varphi}_{k,T}$, $ \forall \, j \in \br $, 
with $ \mn{\varphi}_{k}^2(t) =   \int |\w{D_x}^k \varphi(t,x)|^2\ dx$.
The following result gives uniform estimates in the Sobolev norms, for simplicity we only take $ k \in \bn $.

\begin{prop}\label{linest}
	Let $ v,\, f \in C^\infty_0 $ and $ u \in C^\infty $ be a solution to
	\begin{multline}\label{lineq}
	Q(v, t,x,D)u  = \partial_t u + \sum_{j=1}^n A_j (v, t,x,D_x) \partial_{x_j} u \\ + A_0(v, t,x,D_x) u  =  f \qquad u(0, x, w) = 0
	\end{multline}
	where $ A_j \in C^\infty(\br , \Psi^0)$  depends $ C^{\infty} $ on $v $, $ \forall\, j $, and $ A_j $ is real valued modulo $ S^{-1} $ for $  j > 0$. Then there exists $ \ell \in \bn $ so that for any $ k \in \bn$ there exists $ C_k(r) \in C^\infty(\br_+) $ so that
	\begin{equation}
	\mn{\phi u}_{(k)}^2 \le C_{k}(\mn{v}_{(\ell)})
	\mn{f}_{(k)}^2    
	\end{equation}
	if  $ \phi \in C^\infty_0 $ has support where $ |t| \le  1$.
	The estimate only depends on the seminorms of the symbol of\/ $Q  $ and $ \phi $.
\end{prop}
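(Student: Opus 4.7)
The plan is the classical energy method for first-order pseudodifferential systems with real principal part. The key structural observation is that, since $A_j$ is real modulo $S^{-1}$ for $j>0$, the spatial operator
\begin{equation*}
Q_s := \sum_{j\ge 1} A_j(v,t,x,D_x)\partial_{x_j} + A_0(v,t,x,D_x) \in C^\infty(\br,\Psi^1)
\end{equation*}
has purely imaginary principal symbol of degree one; equivalently $iQ_s$ has real principal symbol modulo $S^0$, so~\eqref{est3} of Lemma~\ref{contlem} applied to $iQ_s$ (with $m=1$, $k=0$) gives $\|(Q_s+Q_s^*)u\|_{(0)} \le C(\|v\|_{(\ell)})\|u\|_{(0)}$ with $C^\infty$ dependence on $v$. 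All subsequent pseudodifferential manipulations are justified via Lemma~\ref{contlem}.

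For the base case $k=0$, set $E(t) := \|u(t,\cdot)\|^2_{L^2_x}$. Using $\partial_t u = f - Q_s u$ and the estimate above,
\begin{equation*}
E'(t) = 2\re\langle f(t),u(t)\rangle - \langle(Q_s+Q_s^*)u(t),u(t)\rangle \le \|f(t)\|^2_{L^2_x} + (1+2C)E(t).
\end{equation*}
Gronwall combined with $E(0)=0$ gives $E(t) \le e^{(1+2C)|t|}\int_0^t\|f(s)\|^2_{L^2_x}\,ds$ for $|t|\le 1$, and integrating in $t$ yields $\|\phi u\|^2_{(0)} \le C_0(\|v\|_{(\ell)})\|f\|^2_{(0)}$ for any $\phi\in C_0^\infty$ supported in $\{|t|\le 1\}$.

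For general $k$, set $U := \langle D_x\rangle^k u$; since $[\langle D_x\rangle^k,\partial_t]=0$, we have $QU = \langle D_x\rangle^k f - B_k U$ with $B_k := [\langle D_x\rangle^k, Q_s]\langle D_x\rangle^{-k} \in C^\infty(\br,\Psi^0)$, using $[\Psi^k,\Psi^1]\subset\Psi^k$ and the $C^\infty$ dependence on $v$ from Lemma~\ref{contlem}. The zeroth-order perturbation $B_k$ does not affect the structural hypothesis of the base case, so applying that case to $U$ gives
\begin{equation*}
\int_{|t|\le 1}\|\langle D_x\rangle^k u(t,\cdot)\|^2_{L^2_x}\,dt \le C_k(\|v\|_{(\ell)})\,\|\langle D_x\rangle^k f\|^2_{L^2(\br^n)} \le C_k\|f\|^2_{(k)},
\end{equation*}
the last inequality from $\langle\xi\rangle\le\langle\tau,\xi\rangle$. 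To upgrade to the full Sobolev norm, use $\|\phi u\|^2_{(k)} \cong \sum_{j+|\alpha|\le k}\|\partial_t^j\partial_x^\alpha(\phi u)\|^2_0$ and distribute $\phi$ via Leibniz, reducing matters to bounding $\|\tilde\phi\,\partial_t^j\partial_x^\alpha u\|_0$ for $j+|\alpha|\le k$ and $\tilde\phi\in C_0^\infty$ supported in $\{|t|\le 1\}$. The case $j=0$ is immediate. For $j\ge 1$, induct on $j$ via $\partial_t u = f - Q_s u$: each removed $\partial_t$ produces either $\partial_t^{j-1}\partial_x^\alpha f$ (controlled by $\|f\|_{(k)}$) or $\partial_t^{j-1}\partial_x^\alpha Q_s u$ which, after commuting $\partial_x^\alpha$ past $Q_s$ (commutator $[\partial_x^\alpha,Q_s]\in\Psi^{|\alpha|}$ bounded via Lemma~\ref{contlem}) and using that $Q_s\in\Psi^1$ raises the spatial order by one, reduces to lower $t$-derivatives of $u$ with the total order $j+|\alpha|\le k$ preserved. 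After finitely many iterations everything is bounded by the spatial estimate above together with $\|f\|_{(k)}$.

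The main technical obstacle is the construction in Step 2: verifying that $B_k = [\langle D_x\rangle^k,Q_s]\langle D_x\rangle^{-k}$ genuinely lies in $\Psi^0$ with seminorms continuously controlled by a single Sobolev norm $\|v\|_{(\ell)}$, with $\ell$ independent of $k$. This rests on the weak continuity of symbol composition quantified in Lemma~\ref{contlem} and the observation that the number of seminorms needed for such control is independent of the symbol classes involved, so that the same $\ell$ suffices uniformly through all the commutator manipulations in Steps 2 and 3.
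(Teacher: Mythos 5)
Your proposal is correct and follows essentially the same route as the paper: a Gronwall-type energy estimate for the $x$-Sobolev norms obtained by commuting $\w{D_x}^k$ through $Q$ and using the calculus lemma (your application of~\eqref{est3} to $iQ_s$ to bound $Q_s+Q_s^*$ is just a repackaging of the paper's splitting into $[\w{D_x}^k,A]\partial_x\w{D_x}^{-k}$, $[\re A,\partial_x]$ and $\im A\,\partial_x$ terms), followed by induction on $t$-derivatives via the equation $\partial_t u=f-Q_su$ and assembly of $\mn{\phi u}_{(k)}$ with the total order $j+|\alpha|\le k$ preserved. The uniformity of $\ell$ in $k$ is handled exactly as in the paper, via the order-independence of the number of seminorms in Lemma~\ref{contlem}.
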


Thus, for any $ k \in\bn$ we get uniform local bounds on $ \mn{\phi u}_{(k)} $ when $\mn{ v}_{(\ell)} $ is uniformly bounded. Now $ Q $ is a differential operator in $ t $ but a $ \Psi $DO in $ x $, so in the proof we shall use Lemma~\ref{contlem} in the $ x $ variables.

\begin{proof}
	Let $ A\partial_x = \sum_{j=1}^n A_j  \partial_{x_j} $  and $ \w{u,u}_{k}(t ) =  \mn{u}_{k}^2(t )$ be the sesquilinear form,
	then 
	\begin{multline}
	\partial_t \mn{u}_{k}^2(t ) = 2 \re  \w{	\partial_t u,u}_{k}(t )	=  2 \re  \w{ f, u}_{k}(t ) \\ - 2 \re  \w{A \partial_t u,u}_{k}(t ) - 2 \re  \w{A_0 u,u}_{k}(t )
	\end{multline}
	Conjugating with $ e^{-Ct} $ gives 
	\begin{multline}
	\partial_t ( e^{-Ct} \mn{u}_{k}^2(t )) 	=  e^{-Ct}  \big(2 \re  \w{f, u}_{k}(t ) \\ - 2 \re  \w{A \partial_x u,u}_{k}(t ) - 2 \re  \w{A_0 u,u}_{k}(t )  - C \mn{u}_{k}^2(t )\big)
	\end{multline}
	where 
	\begin{multline}
	2 \re  \w{A \partial_x u,u}_{k}(t ) = 2 \re \w{[\w{ D_x}^k, A]\partial_x \w{ D_x}^{-k} w, w}_{0}(t ) \\ +  \w{[\re A,\partial_x, ]  w, w}_{0}(t ) + 2 \re \w{i  \im A\partial_x  w, w}_{0}(t )
	= \w{R_k w, w}_{0}(t)
	\end{multline}
	where $ [\re A, \partial_x] $ and $ \im A\partial_x \in C^\infty(\br,  \Psi^{0}) $ and $ w = \w{ D_x}^k u $. The calculus gives that the operator $ [\w{ D_x}^k, A]\partial_x \w{ D_x}^{-k}  \in C^\infty(\br,  \Psi^{0})$, so that  $ R_k\in C^\infty(\br,  \Psi^{0}) $ depends $ C^\infty $ on $ v $. 
	
	Since $ \mn{w}_{0} =  \mn{u}_{k} $  we find by 
	using Lemma~\ref{contlem} that
	\begin{equation}
	| \w{R_k w,w}_{0}(t )| \le  C_k(\mn{v}_{\ell}(t))\mn{w}_{0}^2(t ) = C_k(\mn{v}_{\ell}(t))\mn{u}_{k}^2(t )
	\end{equation}
	for some   $ \ell \in\bn $  and $ C_k(r) \in C^\infty(\br) $, and clearly
	\begin{equation}
	| \w{ f,u}_{k}(t )|   \le  \mn{ f}_{k}^2 (t )+ \mn{u}_{k}^2(t )
	\end{equation}
	We also obtain from Lemma~\ref{contlem} that
	\begin{equation}
	|\w{A_0 u,u}_{k}(t )| \le C_k(\mn{v}_{\ell}(t))\mn{u}_{k}^2(t )
	\end{equation}
	where in the following we will take the maximum of different  $ \ell $ and $ C_k(r) $.
	Summing up, there exists $ C_k(r)  $ so that
	\begin{equation}
	\partial_t ( e^{-Ct} \mn{u}_{k}^2(t )) 	\le  e^{-Ct}\left((C_k(\mn{v}_{\ell}(t))  - C)\mn{u}_{k}^2(t )
	 + 	C_k(\mn{v}_{\ell}(t))\mn{ f}_{k}^2(t)\right) 
	\end{equation}
	Now we may replace $ C_k(r)  $ by a nondecreasing function
	and put 
	$$ C_k =  \max_{|t | \le 1} C_k(\mn{v}_{\ell}(t)) \le C_k\left(\max_{|t | \le 1} \mn{v}_{\ell}(t)\right) 
	$$ 
	where $ \mn{v}_{\ell}(t) \le  C_0\mn{v}_{( \ell + 1)}$,  $\forall \, t$, by Sobolev's inequality. 
	Since  $  \mn{u}_{k}(0 ) = 0$ we find by integrating that
	\begin{equation}
	e^{-C_kt} \mn{u}_{k}^2(t ) 	\le e^{C_k}C_k(\mn{v}_{(\ell + 1)})\mn{ f}_{k,1}^2   \qquad  t \in [-1, 1]
	\end{equation}
	Integrating over $ [-1, 1] $ we obtain that
	\begin{equation}
	\mn{u}_{k,1}^2	\le 2 e^{2C_k}	C_k(\mn{v}_{( \ell + 1)})\mn{ f}_{k,1}^2
	\end{equation}
	where $ \mn{ f}_{k,1} \le \mn{ f}_{(k)} $.
	Renaming  $2 e^{2C_k}C_k(\mn{v}_{( \ell + 1)})  $  as  $C_k(\mn{v}_{( \ell + 1)})  $  and changing $ \ell $ 
	we obtain 
	\begin{equation}\label{kest}
	\mn{u}_{k,1}^2	\le C_k(\mn{v}_{( \ell )}) \mn{ f}^2_{(k)}  \qquad \forall\, k \in \bn  
	\end{equation}

	Next, we shall estimate 
	\begin{equation}\label{kjest}
	\mn{u}_{k,j,1}^2	\le C_{k,j}(\mn{v}_{( \ell )}) \mn{ f}^2_{(k+j)}  \qquad \forall\, k \in \bn 
	\end{equation}
	 when $j  \in \bn$. For that it suffices to estimate $ 	\mn{D_t^{j} u}_{k,1} $ 
	which we shall do by induction. The case $ j = 0 $ is given by~\eqref{kest}, and we assume that~\eqref{kjest} holds for $ i \le j $ for some $ j \in \bn$. Since $ Qu = \partial_t u + Au = f $ with $ A\in C^\infty(\br, \Psi^1) $, we have
	\begin{equation}
			\mn{D_t^{j+1} u}_{k,1} \le	\mn{ D_t^{j}f}_{k,1} + 	\mn{D_t^{j}A u}_{k,1} 
	\end{equation}
	where $D_t^{j}A = A D_t^{j} + \sum_{0 \le i < j} B_{i}D_t^{i} $
	 with $  B_{i}(t,x ,D_x) \in C^\infty(\br, \Psi^1)$ being a $ \Psi $DO in $ x $ depending $ C^\infty $ on $ t $ and $ v $. By using Lemma~\ref{contlem} and integrating  over $ [-1, 1] $ we find that 
	 \begin{equation}
	\mn{D_t^{j}A u}_{k,1} \le	\sum_{0 \le i \le j} C_{k,i}(\mn{v}_{( \ell )})	\mn{ u}_{k,i,1} 
	 \end{equation}
     so the induction hypothesis gives~\eqref{kjest} for any $ j \in \bn$.
	
	Finally, we shall show that
	\begin{equation}\label{finalest}
	\mn{ \phi u}^2_{ (k)}	\le C_k( \mn{v}_{(\ell)}) \mn{ f}^2_{(k)} \qquad \forall\, k \in \bn
	\end{equation}
	if $\phi \in  C^\infty $ is supported where $ |t| \le 1 $.
	To estimate $ \mn{\phi u}_{ (k)}^2 $ it suffices to estimate $ \mn{D_x^\alpha D_t^j  \phi u} $ for $ | \alpha | + j \le k $.
	We have that 
	$$
	[D_x^\alpha D_t^j , \phi  ] =  \sum_{|\beta | + i \le k}B_{\beta, i}D_x^\beta D_t^i 
	$$ 
	where $  B_{\beta, i}\in  C^{\infty} $ has support where $ |t | \le 1 $. 
	Thus, \eqref{kjest} gives that 
	\begin{equation}\label{finallyest}
	\mn{D_x^\beta D_t^j\phi u}	\le C\sum_{0 \le  i \le j} \mn{u}_{k-i,i, 1}	\le   \sum_{0 \le  i \le j} C_{k-i,i}(\mn{v}_{(\ell)}) \mn{ f}^2_{(k)}
	\end{equation}
	which completes the proof. 
\end{proof}

\section{Solutions of the Linearized Equation}

Next, we shall solve the  IVP for the linearized equation
\begin{equation}\label{plinequation}
P(v(x),x, D) u(x) = f(x) 
\end{equation}
where $ f $, $ v \in C^\infty$ 
and $ P $ is on the form~\eqref{psymbol} with principal symbol $ p_m(v(x),x, \xi) $ homogeneous of degree $ m \in \br$ and of real principal type when $ | x | \ll 1 $. 
In the following, we shall suppress the parameter $ v \in C^\infty$, the preparation will only depend on the bounds on this parameter.

To solve equation~\eqref{plinequation}, we shall assume that $ x_0 = 0 $ and use the microlocal normal forms given by Proposition~\ref{normalform}. In fact, for any small enough $ \varepsilon > 0 $  we can by Remark~\ref{microrem} find  a partition of unity $\{ \varphi_{j}(\xi) \}_j $ with $ \varphi_{j}\in S^0 $ supported in cones $ \Gamma_{\xi_j, \varepsilon} $  and linear symplectic variables  so that $  P b_j = a_j Q_j + R_j$ satisfies the conditions in Proposition~\ref{normalform} and Remark~\ref{normrem}, with $\Gamma_0 = \Gamma_j = (v_0,0) \times \Gamma_{\xi_j, \varepsilon} $ after the linear symplectic change of variables. 
Here  $ a_j \in I^0$,  $b_j \in I^{1-m}$  are elliptic and 
$Q_j = D_{x_1} + A_jD_{x'} + A_{0,j} $
satisfies the conditions in Proposition~\ref{normeq}. The remainder term $ R_j \in I^1 $ is also in $I^{-1}$ at  $\Gamma_j $.

Since $I^0 \ni a_j $ is elliptic, there exists $  a_j^{-1} \in I^0$ svch that $ a_j a_j^{-1}  = \id  + B_j$ with $ B_j \in \Psi^{-1} $. Thus, 
by ignoring the operator $ R_j $, which will be handled as a perturbation,  we obtain from Proposi\-tion~\ref{normeq}
that if  $ f \in C^\infty $ then $ u_j = \mathbb{ F}_ja_j^{-1}\varphi_{j}f $ solves
\begin{equation}\label{microeq0}
Q_j u_j = (\id + s_j) a_j^{-1}\varphi_{j}f = (a_j^{-1} + r_j)\varphi_{j}f  \qquad u_j \restr{t = 0} = 0
\end{equation}
where $ s_j $ and $ r_j  = s_ja_j^{-1}\in I^{-1} $.

But $ u_j$ may not be localized near $\Gamma_j  $.
To handle the localization and the error term $ R_j $ we
shall microlocalize $ u_j  $ depending on parameters. We shall use  the cut-off $ \psi_{j,\varrho}(\xi)  = \psi_{j}(\xi)\chi_{\varrho}(\xi) \in S^0$ given  by Remark~\ref{microrem} with $ \varrho \ge 1 $ such that $0 \le  \chi_{\varrho} \le 1$ has support where $ |\xi | \ge \varrho $,  $ \psi_j \varphi_{j} = \varphi_j $ and $ \supp \psi_j \in \Gamma_{j} $. 
We shall also cut off with  $\Phi(x) \in C^\infty_0(\br^n) $ such that $ 0 \le \Phi \le 1 $, $  \Phi $  has support where $ | x| \le 1 $ and is equal to 1 when $ | x| \le 1/2 $.
We find that
\begin{equation}\label{microloc}
u_j =  \psi_{j,\varrho}\Phi u_j  + (1- \psi_{j,\varrho}) \Phi u_j  + (1 - \Phi)u_j=  u_{j,\varrho}  + S_{j,\varrho} f
\end{equation}
when $ | x| < 1/2 $, where $u_{j,\varrho} =    \psi_{j,\varrho}\Phi u_j$ and $ S_{j,\varrho} = (1- \psi_{j,\varrho})\Phi  \mathbb{F}_ja_j^{-1} \varphi_{j}   \in I^0$ since $ \mathbb{F}_j a_j^{-1} \varphi_{j} \in I^0$. Thus we find that $ S_{j,\varrho}f(x) $ depends on the values of $ f(y) $ when $y_1 $ is in the interval between 0 and $ x_1 $.
Since  $ \psi_j \varphi_{j} = \varphi_j $ we find that $  (1- \psi_{j,\varrho}) \varphi_{j} = (1- \chi_{\varrho}) \varphi_{j}  $ which is supported where $ | \xi | \ls \varrho $.

Since $ [Q_j, \Phi] $ is supported where $ |x| \ge 1/2 $ modulo $ I^{-\infty} $,  we obtain from~\eqref{normequation} that
$$ 
Q_jS_{j,\varrho} =   [\psi_{j,\varrho}, Q_j ]  \Phi \mathbb{F}_ja_j^{-1}\varphi_{j} +  (1- \psi_{j,\varrho})\Phi (\id + s_j)a_j^{-1} \varphi_{j}  \in I^{-\infty}
$$ 
when $ | x| < 1/2 $, where $ s_j \in I^{-1} $ so $Q_jS_{j,\varrho} \in I^0 $ uniformly with symbol supported where $| \xi | \ls \varrho $ modulo  $ S^{-\infty}$. 
We find from~\eqref{prepform}, \eqref{microeq0} and~\eqref{microloc} that
\begin{equation}\label{linsol0}
Pb_j u_{j,\varrho} = a_jQ_j (u_{j}  - S_{j,\varrho} f)  + R_j u_{j,\varrho} 
= \left((\id + B_j + a_jr_j)\varphi_j  - a_j Q_jS_{j,\varrho} +  R_{j,\varrho} \right) f  
\end{equation}
where $ a_j Q_jS_{j,\varrho} \in I^{-\infty}$, $ a_jr_j \in I^{-1} $ and $ R_{j,\varrho} =  R_j\psi_{j,\varrho} \mathbb{F}_ja_j^{-1} \varphi_{j}\in I^{-1}$ with symbol supported where $ |\xi | \gtrsim \varrho $ modulo $ S^{-\infty}$ uniformly when $|x | \ll 1 $, since  $ R_j \psi_{j,\varrho} \in I^{-1} $ for $| x| \ll 1 $ by Proposition~\ref{normalform}. 
Since $ \varrho \psi_{j,\varrho} \in S^1 $ uniformly by Remark~\ref{microrem}, we find $ \varrho R_{j,\varrho} \in I^0$  uniformly when $| x| \ll 1$.

Now we define
\begin{equation}\label{linsol}
u_{\varrho}(x) = \sum_{j} b_j(D)\psi_{j,\varrho}(D) \Phi  u_j(x)  = \sum_{j} b_j(D)u_{j,\varrho}(x) 
\end{equation}
where $\varrho^{m-1} b_j\psi_{j,\varrho} \in I^0 $ uniformly when $ \varrho \ge 1 $.
We obtain from~\eqref{linsol0} and~\eqref{linsol} that
\begin{equation}\label{sumsys}
P u_{\varrho} =  
f +  \sum_j \left((B_j  + a_jr_j)\varphi_{j}  - a_jQ_jS_{j,\varrho} + R_{j,\varrho}\right)f
= (\id+ R_{\varrho}) f
\end{equation}
where $ R_{\varrho} =  \sum_j (B_j + a_j r_j)\varphi_j  - a_jQ_jS_{j,\varrho} + R_ {j,\varrho} \in I^{-1}$ when $ |x|  \ll 1 $. We shall localize the first terms in $ \xi $ by writing $ B_j + a_j r_j =  (B_j + a_j r_j)(1 - \chi_{\varrho}) +  {(B_j + a_j r_j)\chi_\varrho} $ which gives 
\begin{equation}\label{RrhoTdef}
R_{\varrho } = 
R_{\varrho,0}  +  R_{\varrho, 1} 
\end{equation}
Here $ R_{\varrho,0} =  \sum_j (B_j + a_jr_j )\varphi_j(1 - \chi_{\varrho})  - a_jQ_jS_{j,\varrho} \in  I^0  $ uniformly with symbol supported where $ |\xi | \ls \varrho $ modulo $ S^{-\infty}$ uniformly when $ \varrho\ge 1 $ and $| x| \ll 1$,  and $  R_{\varrho,1} =  \sum_j (B_j + a_jr_j )\varphi_{j,\varrho} +   R_ {j,\varrho}\in I^{-1}$ uniformly so that  $ \varrho R_{\varrho, 1}  \in I^0$ uniformly when $ \varrho \ge 1 $  and $ |x| \ll 1 $.

Since we are only need local solutions, we may cut off near $ x= 0$.
Let  $ \Phi_\delta(x) = \Phi(x/\delta) $ with $ 0 < \delta \le 1 $. 
To solve the equation near $ x=0 $ it is enough that $ \Phi_\delta Pu_{\varrho} =  \Phi_\delta f $ for some $0 < \delta  \le 1$. 
If  $ f$ has support where $|x| \le \delta/2 $  then $ f = \Phi_\delta f $ so
we obtain from~\eqref{sumsys} that $  \Phi_\delta Pu_{\varrho} =  \Phi_\delta(\id  + R_{\varrho})\Phi_{\delta} f = (\id +  R_{\delta, \varrho}) f$ where $R_{\delta, \varrho} =  \Phi_\delta R_{\varrho}\Phi_{\delta} $. 
By~\eqref{RrhoTdef} we have 
\begin{equation}\label{rdrt}
R_{\delta, \varrho} =   R_{\delta, \varrho, 0} + R_{\delta, \varrho, 1}
\end{equation}
with  $  R_{\delta, \varrho, j} = \Phi_\delta R_{\varrho,j}\Phi_{\delta}$. 
For fixed $0 < \delta \ll 1$ we have  $ \varrho R_{\delta, \varrho, 1} \in I^0$ uniformly when $ \varrho\ge 1 $ and  $R_{\delta, \varrho, 0} \in I^{0}$  has $ C^{\infty} $ kernel depending on  $ \varrho $  and  $ \delta $ with symbol supported where $ |\xi | \ls \varrho $ modulo $  S^{-\infty}$ uniformly when $ \varrho\ge 1 $.

It remains to invert the term $ \id +  R_{\delta, \varrho} $ in order to solve equation~\eqref{plinequation}. 
This will be done in two steps, first making $R_{\delta, \varrho, 1}  $ small by taking large enough $ \varrho $. This may increase the seminorms of $ R_{\delta, \varrho, 0} $, but this term can then be made small by localizing in a sufficiently small neighborhood of  $ x= 0 $.

Since $ \varrho R_{\delta, \varrho, 1} \in I^0$ uniformly when $ \varrho\ge 1 $, we find by Remark~\ref{fiorem} that there exists $\varrho_{\delta}  \ge 1  $  so that 
$ \mn{R_{\delta, \varrho, 1} f}_{(0)} \le \mn{ f}_{(0)}/2 $  for $ f \in \Cal S $  when $ \varrho \ge \varrho_{\delta} $.
Then we find that 
$$ (\id+ R_{\delta, \varrho, 1})^{-1}  = \id + \sum_{ k > 0}(-R_{\delta, \varrho, 1})^k \in I^0 \  \text{ uniformly in $ \varrho $}
$$ 
Observe that $ (-R_{\delta, \varrho, 1})^k $ has kernel supported where $ |x | \le \delta $ and $ |y | \le \delta $.  If we then solve 
\begin{equation}\label{microeq1}
Q_j u_j = a_j^{-1} \varphi_{j}(\id+ R_{\delta, \varrho,1})^{-1}f  \qquad u_j \restr{x_1 = 0} = 0
\end{equation}
for $  f $ supported where $ |x| \le \delta/2 $, then the previous reduction and~\eqref{rdrt} give
\begin{equation}\label{sumsys1}
\Phi_\delta P u_{\varrho} =  (\id+ R_{\delta, \varrho})(\id+ R_{\delta, \varrho,1})^{-1}f = (\id + R_{\delta,\varrho,2}) f 
\end{equation}
where $ R_{\delta,\varrho,2} = R_{\delta,\varrho, 0}(\id + R_{\delta,\varrho,1})^{-1} \in I^{-\infty}$ with $ C^\infty $ kernel supported where $ |x | \le \delta $ and $ |y | \le \delta $ and with symbol supported where $ |\xi | \ls \varrho $ modulo $ S^{-\infty}$ uniformly when $ \varrho\ge 1 $.  Observe  that we have uniform bounds for  fixed $ \delta $ when  $ \varrho \ge \varrho_{\delta} $ and these bounds depend on the bounds on the symbol of $ P $ and the parameters $ v \in C^\infty $ and~$ w $. We shall later put more restraints on the lower bound of $ \varrho $ because of conditions on the estimates, see~\eqref{indest}, and the values of $ u_{\varrho}(0)$ and $ \partial^\alpha u_{\varrho}(0)$ for  $ | \alpha | < m $, see~\eqref{initrho}.

Now we have to shrink the support of $R_{\delta,\varrho,2}  $ to lower the norm of the operator without changing $R_{\delta,\varrho,1}  $. With fixed $0 <  \delta \le 1$ and $ \varrho_{\delta} \ge 1$,  we assume $ \varrho \ge \varrho_{\delta} $ and multiply the equation~\eqref{sumsys1} with $ \Phi_{\delta_0}  $ with $ 0 < \delta_0 \le \delta/2 < 1/2$ so that $  \Phi_{\delta} = 1 $ on $ \supp  \Phi_{\delta_0} $. If  $ f  $ is  supported where $ |x| \le \delta_0/2 < 1/4$, then we obtain as before that  $ f = \Phi_\delta f $ and
\begin{equation}\label{sumsys2}
\Phi_{\delta_0}  P u_{\varrho} =  \left(\id + R_{\delta_0,\varrho,2 }\right)f 
\end{equation}
where $R_{\delta_0,\varrho,2} =  \Phi_{\delta_0} R_{\delta,\varrho,2} \Phi_{\delta_0} $.

\begin{lem}\label{estlem2}
	There exists $C_0 > 0  $ so that if  $ \varrho \ge \varrho_{\delta} $ and $ 0 < \delta_0 \le \delta/2 < 1/2$ we have
	\begin{equation}
	\mn{R_{\delta_0,\varrho,2}\varphi}_{(0) }\le C_0 \varrho^n \delta_0^n  \mn{\varphi}_{(0) } \qquad \forall \,  \varphi \in \Cal S
	\end{equation}
\end{lem}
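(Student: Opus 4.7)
The plan is to bound the operator norm of $R_{\delta_0,\varrho,2}$ on $L^2$ by a Schur-type argument applied to its Schwartz kernel. The crucial observation is that $R_{\delta,\varrho,2} \in I^{-\infty}$ has a smooth kernel, and the two cut-offs $\Phi_{\delta_0}(x)$ and $\Phi_{\delta_0}(y)$ confine this kernel to the set $\set{|x| \le \delta_0} \times \set{|y| \le \delta_0}$, whose slices have volume $O(\delta_0^n)$. The other input is that the symbol of $R_{\delta,\varrho,2}$ is supported in $|\xi| \lesssim \varrho$ modulo $S^{-\infty}$, which gives a pointwise bound on the kernel of order $\varrho^n$.

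First I would write the Schwartz kernel of $R_{\delta_0,\varrho,2}$ as
\begin{equation*}
K_{\delta_0,\varrho,2}(x,y) = \Phi_{\delta_0}(x)\Phi_{\delta_0}(y)(2\pi)^{-n}\int e^{i(x-y)\cdot\xi}r_{\delta,\varrho,2}(x,\xi)\, d\xi,
\end{equation*}
where $r_{\delta,\varrho,2}(x,\xi)$ is the Kohn--Nirenberg symbol of $R_{\delta,\varrho,2}$. By the construction following~\eqref{sumsys1}, the symbol splits as $r_{\delta,\varrho,2} = r^{(1)}_{\delta,\varrho,2} + r^{(2)}_{\delta,\varrho,2}$, where $r^{(1)}_{\delta,\varrho,2}$ is supported in $|\xi| \le C\varrho$ with uniform $L^\infty$ bounds (depending only on the fixed parameters $\delta$, the bounds on $v$ and on the symbol of $P$), while $r^{(2)}_{\delta,\varrho,2} \in S^{-\infty}$ is bounded in every seminorm uniformly in $\varrho$. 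Integrating in $\xi$ over its compact support of measure $\lesssim \varrho^n$ yields $|K_{\delta,\varrho,2}(x,y)| \le C_0\varrho^n$ from the first term, and a contribution that is $O(1)$ uniformly in $\varrho$ from the second; both are dominated by $C_0\varrho^n$ since $\varrho \ge 1$.

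Next I would apply the Schur test. Since $K_{\delta_0,\varrho,2}(x,y)$ vanishes when $|x| > \delta_0$ or $|y| > \delta_0$, the above pointwise bound gives
\begin{equation*}
\sup_x \int |K_{\delta_0,\varrho,2}(x,y)|\,dy \le C_0\varrho^n \cdot \omega_n \delta_0^n, \qquad \sup_y \int |K_{\delta_0,\varrho,2}(x,y)|\,dx \le C_0\varrho^n \cdot \omega_n \delta_0^n,
\end{equation*}
where $\omega_n$ is the volume of the unit ball. Schur's lemma then produces the $L^2 \to L^2$ bound $\|R_{\delta_0,\varrho,2}\|_{L^2\to L^2} \le C_0\varrho^n \delta_0^n$ after renaming constants, which is the desired inequality in the $(0)$-norm.

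The main subtlety, rather than a genuine obstacle, is to verify that the constant in the pointwise bound on the kernel is genuinely uniform in $\varrho$ (so that the $\varrho^n$ factor comes \emph{only} from the volume of the $\xi$-support). This is where the uniform symbol bounds produced just after~\eqref{sumsys1}---namely that $R_{\delta,\varrho,2}$ has $C^\infty$ kernel with symbol bounds controlled by those of $P$, $v$ and $w$ and independent of $\varrho \ge \varrho_\delta$---are needed. Once that uniformity is in hand, the argument is essentially a volume count and Schur estimate.
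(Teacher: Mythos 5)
Your proposal is correct and follows essentially the same route as the paper: a uniform pointwise kernel bound of order $\varrho^n$ obtained by integrating the uniformly bounded symbol over its support $|\xi|\lesssim\varrho$ (plus a harmless $S^{-\infty}$ remainder), combined with the $\delta_0^n$ volume factor coming from the two cut-offs $\Phi_{\delta_0}$. The only (immaterial) difference is that you convert the kernel bound into an $L^2$ operator bound via the Schur test, whereas the paper uses Cauchy--Schwarz, i.e.\ the Hilbert--Schmidt norm of the truncated kernel.
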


\begin{proof}
	Now $ (\id + R_{\delta,\varrho,1})^{-1}\in I^{0} $ uniformly in $ \varrho \ge \varrho_{\delta} $ so $ R_{\delta,\varrho,2} = R_{\delta,\varrho, 0}(\id + R_{\delta,\varrho,1})^{-1}\in I^{-\infty}$  having $ C^\infty $ kernel, but we also have that $ R_{\delta,\varrho,2} \in  I^0 $ uniformly in $ \varrho \ge 1 $ with a bounded symbol supported where $ |\xi | \ls \varrho $ modulo $  S^{-\infty}$ uniformly when $ \varrho\ge 1 $.
	
	By integrating the symbol in $ \xi $, we find that $ R_{\delta,\varrho,2} $ has $ C^\infty $ kernel $ R_2 (x,y)$ so that $ \mn{R_2}_\infty \le C\varrho^n $.
	Thus  $ R_{\delta_0,\varrho, 2}$  has kernel  $R_0 = \Phi_{\delta_0}(x) R_{2}(x,y) \Phi_{\delta_0}(y) \in I^{-\infty} $ and since 
	$$ 
	\mn{R_{\delta_0,\varrho, 2}f }^2_{(0)} \le \int \left| \int R_0(x,y) f(y)\, dy \right |^2 \, dx  \le \mn{R_0}^2_{(0)} \mn{f}^2_{(0)} 
	$$
	where  $ \mn{R_0}^2_{(0)} \le C \varrho^{2n}(\int \Phi_{\delta_0}(y)\, dy )^2 \ls \varrho^{2n}\delta_0 ^{2n}(\int \Phi(y)\, dy )^2$, which proves the  lemma.
\end{proof}

By Lemma~\ref{estlem2} we find that if $ c_0 = (2C_0)^{-1/n}$ then for $0 < \delta_0 \le \min(c_0/\varrho, \delta/2) $  we have that $ \mn{R_{\delta_0,\varrho,2}(x,D)\varphi}_{(0) } \le \mn{\varphi}_{(0) }/2$ for $ \varphi \in \Cal S $. We obtain that
$$ 
(\id + R_{\delta_0,\varrho,2}(x,D) )^{-1} = \sum_{j \ge 0}(-R_{\delta_0,\varrho,2}(x,D) )^j \cong \id 
$$ 
modulo  an operator in $ I^{-\infty}$ with $ C^\infty_0 $ kernel supported where $ |x | \le \delta_0 $ and $ |y | \le \delta_0 $. 
By replacing $ f $ in~\eqref{microeq1} by $  (\id + R_{\delta_0,\varrho,2})^{-1} f $ we obtain the first part of the following result.

\begin{prop}\label{initlem}
	Let  $ P(v,x,D) $ be given by Theorem~\ref{appthm}, of real principal type near $ (x_0, 0 )$, $ \varphi_{j} $ be given by Remark~\ref{microrem}, $P b_j  = a_jQ_j  + R_j $ by Proposition~\ref{normalform} with $ x_0 = 0 $ and let $R_{\delta, \varrho,1}  $ and $  R_{\delta_0,\varrho,2}$ be given by~\eqref{rdrt} and~\eqref{sumsys2}  depending $ C^\infty $ on $ v $. Then there exist  $0 <  \delta \le 1 $, $  \varrho_{\delta} \ge 1$ and $ c_0 > 0 $ so that if  $ \varrho \ge \varrho_{\delta} $ and  $ 0 < \delta_0 \le \min (c_0/\varrho, \delta/2) $, then $(\id+ R_{\delta, \varrho, 1})^{-1}(\id + R_{\delta_0,\varrho,2})^{-1} = \id + R_{\varrho,\delta_0}  \in I^0$ is uniformly bounded.
	If	$ f \in C^\infty $ and  $ u_j  \in C^\infty$ solves
	\begin{equation}\label{microeq2}
	Q_j u_j = a_j^{-1} \varphi_j (\id + R_{\varrho,\delta_0})\Phi_{\delta_0}f \qquad u_j \restr{x_1 = 0} = 0 \quad \forall \ j
	\end{equation} 
	then $ u_{\varrho}(x) = \sum_{j} b_j\varphi_{j,\varrho}\Phi u_j(x) $ solves
	\begin{equation}
	P u_{\varrho} = f
	\end{equation}
	when $ |x| \le c_2\delta_0$ for some $ c_2 > 0 $. 
	We also have that $  u_{\varrho}(x) = c_{0,x, \varrho}(f) $ and 
	$$ 
	\partial^\alpha u_{\varrho}(x) = c_{\alpha, x,\varrho}(f) \qquad \text{when  $ | \alpha| < m $ and  $ | x | \le c_2\delta_0 $}
	$$
	where $\varrho^{m - 1}  c_{0,x, \varrho}$ and $\varrho^{m - 1 - |\alpha |} c_{\alpha, x,\varrho}  \in \Cal E' $, $  | \alpha| < m $,  are $ o(1)$ uniformly when $ \varrho\to \infty $, i.e.,  $\varrho^{m - 1 - |\alpha |} c_{\alpha, x, \varrho} (f)  = o(1) $ when $ | x | \le c_2\delta_0 $ and $ \varrho\to \infty $. 
\end{prop}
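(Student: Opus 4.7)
The plan is to verify the three claims separately, with the main work in the second.

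For the invertibility claim I would appeal directly to the preceding estimates. The paragraph before Lemma~\ref{estlem2} supplies $\varrho_\delta$ with $\mn{R_{\delta,\varrho,1}g}_{(0)}\le \mn{g}_{(0)}/2$ for $\varrho\ge \varrho_\delta$, and Lemma~\ref{estlem2} supplies $c_0$ with $\mn{R_{\delta_0,\varrho,2}g}_{(0)}\le \mn{g}_{(0)}/2$ whenever $\delta_0\le \min(c_0/\varrho,\delta/2)$. Both inverses are then given by convergent Neumann series in $I^0$, uniformly in $\varrho$, and the composition $\id + R_{\varrho,\delta_0}\in I^0$ is uniformly bounded as claimed.

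For the solvability claim I would chain the reductions \eqref{sumsys1} and \eqref{sumsys2}. Set $f_1 = (\id + R_{\delta_0,\varrho,2})^{-1}\Phi_{\delta_0} f$; since the kernel of $(\id + R_{\delta_0,\varrho,2})^{-1}-\id$ is supported in $B_{0,\delta_0}\times B_{0,\delta_0}$, the function $f_1$ is compactly supported in $B_{0,\delta_0}\subset B_{0,\delta/2}$. The input of \eqref{microeq2} reads $(\id+R_{\delta,\varrho,1})^{-1} f_1$, so the hypothesis of \eqref{sumsys1} applies and gives $\Phi_\delta P u_\varrho = (\id + R_{\delta,\varrho,2}) f_1$. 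Multiplying by $\Phi_{\delta_0}$ and using $\Phi_{\delta_0}\Phi_\delta = \Phi_{\delta_0}$, $R_{\delta_0,\varrho,2}=\Phi_{\delta_0}R_{\delta,\varrho,2}\Phi_{\delta_0}$, together with the defining identity $(\id + R_{\delta_0,\varrho,2}) f_1 = \Phi_{\delta_0} f$, the right-hand side collapses to $\Phi_{\delta_0} f$ at points where $\Phi_{\delta_0}\equiv 1$. Taking $c_2 = 1/2$ puts $\set{|x|\le c_2\delta_0}$ inside this set and therefore yields $Pu_\varrho(x) = f(x)$ there.

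For the last statement I would read off the dependence on $f$ from the explicit expression
$$u_\varrho = i\sum_{j} b_j(D)\varphi_{j,\varrho}(D)\Phi \cdot \mathbb F_j a_j^{-1}\varphi_j (\id + R_{\varrho,\delta_0})\Phi_{\delta_0} f.$$
The key scaling is that $\varrho^{m-1-|\alpha|}\xi^\alpha b_j(\xi)\varphi_{j,\varrho}(\xi) \in S^0$ uniformly in $\varrho\ge 1$ for $|\alpha| < m$, because $b_j\in S^{1-m}$ and $\varphi_{j,\varrho}$ is supported where $|\xi|\ge \varrho$, so $\w{\xi}^{1-m+|\alpha|}\le \varrho^{1-m+|\alpha|}$. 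Combined with the uniform $I^0$-bound on $(\id + R_{\varrho,\delta_0})\Phi_{\delta_0}$ and the uniform symbol bounds on $\mathbb F_j$ from Proposition~\ref{normeq}, one obtains $\partial^\alpha u_\varrho(x) = c_{\alpha,x,\varrho}(f)$ with $\varrho^{m-1-|\alpha|}c_{\alpha,x,\varrho}\in \Cal E'$ uniformly bounded. Since the symbol of each constituent operator is supported where $|\xi|\ge \varrho$ and tends to zero pointwise as $\varrho \to \infty$, dominated convergence yields $\varrho^{m-1-|\alpha|}c_{\alpha,x,\varrho}(f) = o(1)$, uniformly over the compact set $|x|\le c_2\delta_0$.

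The main obstacle is the cutoff bookkeeping in the second step: one must verify that the discrepancy between $R_{\delta,\varrho,2}$ and its $\Phi_{\delta_0}$-conjugate $R_{\delta_0,\varrho,2}$, which lives only in the transition region where $1-\Phi_{\delta_0}\neq 0$, does not leak into the identity $Pu_\varrho = f$ at interior points $|x|\le c_2\delta_0$. The remaining steps are immediate composition of the preceding reductions with Lemma~\ref{contlem} and standard symbol-class scaling.
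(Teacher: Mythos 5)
Your first and third steps follow the paper's own route. The Neumann-series inversion of $\id+R_{\delta,\varrho,1}$ and $\id+R_{\delta_0,\varrho,2}$ from the $1/2$-norm bounds and Lemma~\ref{estlem2} is exactly what precedes the proposition, and your treatment of the last claim is the paper's argument in compressed form: the paper writes $\varrho^{m-1-|\alpha|}\partial^\alpha E_{j,\varrho}$, $E_{j,\varrho}=b_j\psi_{j,\varrho}$, as a uniformly bounded oscillatory integral with symbol supported in $|\xi|\ge\varrho$, bounds $|\partial^\alpha E_{j,\varrho}u(x)|$ by $\varrho^{|\alpha|+1-m}\int_{|\xi|\ge\varrho}|\wh u(\xi)|\,d\xi$, and then closes with $\mn{\wh u}_{L^1}\le C_n\mn{u}_{((n+1)/2)}$ together with Lemma~\ref{contlem} and the uniform $I^0$ bound on $F_{j,\varrho}=\mathbb{F}_ja_j^{-1}\varphi_j(\id+R_{\varrho,\delta_0})\Phi_{\delta_0}$ to get $\mn{\Phi u_j}_{((n+1)/2)}\lesssim\mn{f}_{((n+1)/2)}$ uniformly in $\varrho$. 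You should make this last chain explicit: the function whose Fourier transform you truncate is $u_j=F_{j,\varrho}f$, which itself depends on $\varrho$, so the dominated-convergence step needs the uniform Sobolev control, not only pointwise decay of the symbols.

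The genuine soft spot is your second step, which you flag but do not close. With $f_1=(\id+R_{\delta_0,\varrho,2})^{-1}\Phi_{\delta_0}f$ the right-hand side produced by \eqref{sumsys1} is $f_1+R_{\delta,\varrho,2}f_1$, whereas the defining identity reads $f_1+\Phi_{\delta_0}R_{\delta,\varrho,2}\Phi_{\delta_0}f_1=\Phi_{\delta_0}f$; at points where $\Phi_{\delta_0}=1$ these differ by $R_{\delta,\varrho,2}\bigl((1-\Phi_{\delta_0})f_1\bigr)$. Since $f_1$ is only supported in $|x|\le\delta_0$ (not in $|x|\le\delta_0/2$ where $\Phi_{\delta_0}=1$) and $R_{\delta,\varrho,2}$ has a smooth nonlocal kernel, this discrepancy does not vanish on $|x|\le c_2\delta_0$, so the asserted ``collapse'' to $\Phi_{\delta_0}f$ is not an identity as written; it needs a repair, e.g.\ inverting $\id+\Phi_{\delta_0}R_{\delta,\varrho,2}$ with a one-sided cutoff (adjusting the smallness estimate of Lemma~\ref{estlem2}) or otherwise absorbing the error term so that the exact equation $Pu_\varrho=f$ near $0$ survives for the iteration in the final section. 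In fairness, the paper is equally silent here --- it only says that replacing $f$ in \eqref{microeq1} by $(\id+R_{\delta_0,\varrho,2})^{-1}f$ gives the first part, and its proof block argues only the last claim --- but the collapse assertion is the one step of your write-up that fails as stated and must be fixed rather than merely noted.
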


\begin{proof}
	In only remains to prove the statements about $\partial^\alpha u_{\varrho}(x) $ when $ | x | \le c_2\delta_0 $. Observe that these statements imply that $ P $ is of real principal type near  $ (u_{\varrho}(x), x) $ when $ | x | \le c_2\delta_0 $  and  $ \varrho \gg 1$.
	Since $ u_{\varrho} = \sum_{j} b_ju_{j,\varrho}$ it suffices to consider the terms $ b_ju_{j,\varrho}  = E_{j,\varrho}\Phi u_j$, where  $E_{j,\varrho} = b_j\psi_{j,\varrho} \in I^{1-m} $ has symbol supported where $  |\xi | \ge \varrho  $. 
	By Proposition~\ref{normeq} we have that $ u_j(x) = F_{j,\varrho}f(x) $ depends linearly on $ f $, where $F_{j,\varrho} =  \mathbb{F}_j a_j^{-1} \varphi_{j} (\id+ R_{\varrho,\delta_0})  \Phi_{\delta_0 } \in I^0 $ uniformly when  $ \varrho \ge \varrho_{\delta} $. 
    We only have to prove the continuity, since the factor $ \Phi_{\delta_0 } $ in $ F_{j,\varrho} $ gives compact support.
	   
	 We have that  $ \varrho^{m - 1} E_{j,\varrho}$ and $\varrho^{m - 1 - | \alpha| }  \partial_x^{\alpha} E_{j,\varrho}   \in I^{0}$ uniformly with symbol supported where  $ |\xi | \ge \varrho $. Thus,  when $| \alpha| < m  $ we find
	    \[ 
	    \varrho^{m - 1 - | \alpha| }  \partial_x^{\alpha} E_{j,\varrho}   u(x) = (2\pi)^{-n} \int e^{i(\w{x,\xi} + \phi_j(x,\xi) )} e_{j,\alpha, \varrho}(x,\xi)\wh u(\xi)\, d\xi \qquad \forall\  u \in  \Cal S  \quad \forall\, x    
	     \]
		where $\phi_j(x,\xi)  $ is real and homogeneous of degree $ 1 $ in $ \xi $ and $e_{j,\alpha, \varrho} \in S^0  $ has support where $ | \xi | \ge  \varrho  $. Thus,  we find that
	    \begin{equation}\label{FL1est}
		|\partial_x^{\alpha}E_{j,\varrho}   u(x)| \ls  \varrho^{ | \alpha| + 1 - m } \int_{| \xi | \ge \varrho}  |\wh{ u}(\xi)| \, d\xi \le o(\varrho^{ | \alpha| + 1 - m })\mn{\wh{u}}_{L^1}  \qquad \forall\  u \in \Cal S  \quad \forall\, x    
		\end{equation}
		by dominated convergence as $ \varrho \to \infty $. 
		Here $\mn{\wh u}_{L^1} =  \mn{u}_{FL^1} $ is the Fourier $ L^1 $ norm, so there exists $ C_n > 0 $ so that $ \mn{u}_{FL^1}  \le C_n \mn{u}_{\left(\frac{n + 1}{2}\right)}$ for any $ u \in \Cal S   $. 
	This gives that $	| \partial^{\alpha}E_{j, \varrho}\Phi u_j(x) |  \ls o(\varrho^{| \alpha| + 1 -m}) \mn{\Phi u_j}_{\left(\frac{n + 1}{2}\right) }$ for $ | \alpha| < m  $ and 
	\begin{equation}
	| E_{j, \varrho} \Phi u_j(x) | \le o( \varrho^{1-m }) \mn{  \Phi u_j}_{\left(\frac{n + 1}{2}\right)}  \qquad \text{when $ \varrho \to \infty $ and $  | x | \le c_2\delta_0  $}
	\end{equation}
	Now $ u_j(x) = F_{j,\varrho}f(x) $, where $ F_{j,\varrho} \in I^0  $ uniformly when $ \varrho \ge \varrho_{\delta} $, so 
    by Lemma~\ref{contlem} we find that 
	$$ 
	\mn{\Phi u_j}_{\left(\frac{n + 1}{2}\right)} \le C_0 \mn{ u_j}_{\left(\frac{n + 1}{2}\right)} \le C_1  \mn{f}_{\left(\frac{n + 1}{2}\right)} \qquad \text{when  $ \varrho \ge \varrho_{\delta} $}
	$$  
	which finishes the proof of Proposition~\ref{initlem}.
\end{proof}

\section{Proof of Theorem~\ref{appthm}} \label{thmpf}
	
To solve~\eqref{pequation} we may first assume $ X = \br^n $ and $ x_0 = 0 $, and that $ P(u,x,D) $ is of real principal type near $ (\ol u, 0) $, $ \ol u =    \{  \partial^\alpha u(0) \}_{  |\alpha | < m}$. 
We can then reduce to the case when $ \ol u = 0 $ by changing the dependent variable
\begin{equation}\label{reduc}
u(x) = v(x) + \sum_{|\alpha | < m} u_\alpha x^\alpha/ \alpha!
\end{equation}
in~\eqref{pequation}. Then we obtain the following equation for $ v $:
\begin{equation}\label{pmodequation}
P_{0}v  = f   - P_0\sum_{|\alpha | < m} u_\alpha x^\alpha/ \alpha!
=f - \sum_{|\alpha | < m} u_\alpha P_0x^\alpha/ \alpha! = f_0
\end{equation} 
and $ \partial^\alpha v(0) = 0 $ for $ |\alpha | < m $. Here
\begin{equation}\label{p0eq}
P_0(v,x,D) = P(u,x,D)= P\big(v + \textstyle{ \sum_{|\alpha | < m}} u_\alpha x^\alpha/\alpha !, x, D\big) 
\end{equation}
so $ P_0(v,x,D) $ is of real principal type in a neighborhood $ \omega_0 $ of  $ (0, 0) $.
Now the right hand side of~\eqref{pmodequation} depends on both $ f$ and $u_\alpha $. 
In the case of real operators and real valued data, we get a real change of the dependent variable.

Since we shall only construct local solutions near $ x = 0 $, we shall cut off $v $ with   $ \Phi \in C^\infty_0$ that is supported where $ | x  | \le 1 $,  $0 \le  \Phi \le 1$ and  $ \Phi(x)= 1 $ when $ |x | \le 1/2 $.
	Starting  with  $ v^0 \equiv 0 $  we shall solve the linear equation
	\begin{equation}\label{plinequation0}
	P_0( \Phi v^j(x),x, D) v^{j+1}(x) = f_0(x)  \qquad \forall\, j \ge 0
	\end{equation}
	near $ x = 0 $ so that   $\varrho^{m - 1 - | \alpha | }\partial^\alpha v^{j+1}(0) =  o(1)$, $ | \alpha | < m $,  uniformly as $ \varrho \to \infty $  depending linearly on $ f_0 $.

	To  microlocalize, we use Propositions~\ref{normalform} and~\ref{initlem} to find  $  \varrho_{\delta} \ge 1$, $ 0 < \delta \le 1 $ and $ c_0 > 0 $ so that if $ \varrho \ge \varrho_{\delta}$, $0 < \delta_0 \le  \min (c_0/\varrho, \delta/2) $, we find that~\eqref{plinequation0} after a linear symplectic transformation reduces to the coupled system of equations given by~\eqref{microeq2} for $ j \ge 0 $: 
	\begin{equation}\label{plinsystem}
	Q_k( \Phi  v^{j}(x), x,D) v_k^{j+1}  = a_k^{-1} \varphi_k (\id + R_{\varrho, \delta_0})\Phi_{\delta_0}f_0  \qquad  \text{$ \forall \,k $}
	\end{equation}
	with $ v^0 \equiv 0 $. Here   $ a_k^{-1} $ and $R_{ \varrho,\delta_0}   \in I^0$ uniformly  depending $ C^\infty $ on $ \Phi   v^{j}( x)$ and
	$$ 
	v^{j}(x)=  \sum_{k= 1}^N b_k\psi_{k, \varrho}(D) \Phi v^j_k(x) \in  C^\infty \qquad  j > 0 
	$$
	where  $\varrho^{m-1 - | \alpha |} \partial^\alpha b_k\psi_{k, \varrho} \in I^{0} $ uniformly when  $ | \alpha | < m $ and $ \varrho \ge 1$, since $ b \in I^{1-m} $ and $ \varrho^{m-1- | \alpha |}\partial^\alpha \chi_\varrho \in  \Psi^{m-1}$ uniformly.
	
	Now $ \Phi_{\delta_0 }(x) = 1$  when $ |x| < \delta_0/2 $, so by solving~\eqref{plinsystem} using Proposition~\ref{normeq} we obtain from Proposition~\ref{initlem} a solution to~\eqref{plinequation0} when $ |x| < c_2\delta_0 $ for some $ c_2 > 0 $  such that $ \varrho^{| \alpha | }\partial^\alpha v^{k+1}(x)$, $| \alpha | < m $, are  distributions  of $ f_0  $ which are $  o(1)$ as $ \varrho \to \infty $ when $  |x| \le c_2\delta_0 $. Thus, for large enough $ \varrho $ we find that the graph of  $\{\partial^\alpha v^{k+1} \}_{| \alpha | < m} $ is in $ \omega_0 $ when $  |x| \le c_2\delta_0 $, thus $ P_0(\Phi v^{k+1}(x),x,D) $ is of principal type when $  |x|  \le c_2\delta_0 $. 
	In the case of real operators and real valued data, we shall replace $  v^{j}(x) $ with $ \re v^{j}(x) $.
	
	We are going to prove that the solutions $ v^{j+1} $ to~\eqref{plinequation0} are uniformly bounded in $  C^\infty $ near $ x = 0 $, then we can use the Arzela-Ascoli theorem to get convergence of  a subsequence to a solution to the nonlinear equation~\eqref{pmodequation}.
    Observe that we may solve~\eqref{plinsystem} in any neighborhood of $ x= 0 $, but this system will only give a solution to~\eqref{plinequation0} in a fixed neighborhood of  	$ x= 0 $ depending on $ \delta_0 $ where $ P_0 $ is of principal type.
	
	First we obtain from Lemmas~\ref{contlem} and Remark~\ref{fiorem} that  there exists $ \ell \in \bn $ so that for any $ \mu \in \bn $ there exists $ C_\mu(t) \in C^\infty(\br_+)$ so that 
	\begin{equation}\label{auxeq}
	\mn{a_k^{-1} \varphi_k  (\id+ R_{\varrho,\delta_0})\Phi_{\delta_0}f_0 }^2_{(\mu)} \le C_\mu(\mn{ \Phi  v^j}_{(\ell)})\mn{f_0}^2_{(\mu)}\qquad \forall \, k
	\end{equation}
	since the operators are in $ I^0 $ uniformly depending $ C^\infty $ on $  \Phi  v^j (x)$.

	By~\eqref{plinsystem}, \eqref{auxeq} and Proposition~\ref{linest} we also find that
	there exists $ \ell \in \bn $ such that for any $ \mu \in \bn $ there exists $ C_\mu(t) \in C^\infty(\br_+)$ so that 
	\begin{equation}\label{recureq}
	\mn{\Phi  v_k^{j+1}}_{(\mu)}^2 \le C_\mu(\mn{ \Phi  v^j}_{(\ell)})
	\mn{f_0}_{(\mu)}^2 \qquad \forall \, k
	\end{equation}
	since $ \Phi   \in C^\infty_0 $ has support when $  | x |  \le 1$. 
    Here we may assume that $ C_\mu(t) $ is a nondecreasing function for any $ \mu $.
	Now  $ \varrho^{m-1} b_k \psi_{k, \varrho} \in I^0 $ uniformly when $ \varrho \ge 1$ and  $ \mn{\re u}_{(\mu)} \le  \mn{ u}_{(\mu)} $ for $ u \in \Cal S $, which gives
	\begin{equation}\label{stripest}
	\mn { \Phi v^j}_{(\mu)} \le \varrho^{1- m}  \sqrt{\wt C_\ell}  \sum_{ 1\le k \le N } \mn{  \Phi  v^j_k}_{(\mu)}  \qquad \forall \, \mu \in \bn \quad \forall \, \varrho \ge 1
	\end{equation}
	By using~\eqref{stripest} for $ \mu = \ell  $  we find from~\eqref{recureq} that for any $ k $ we have
	\begin{equation}\label{indest}
	\mn{\Phi  v_k^{j+1}}_{(\mu)}^2\le  C_\mu\left(\varrho^{1- m} \sqrt{\wt C_\ell} \sum_{k=1}^N \mn{\Phi  v^j_k}_{(\ell)} \right) \mn{f_0}_{(\mu)}^2 \qquad \forall \, \mu \in \bn \quad \forall \, \varrho \ge 1
	\end{equation}

	Thus, for any $ \mu \in \bn$ we will obtain uniform bounds on $   \mn{\Phi  v_k^{j}}_{(\mu)}$  if we have uniform bounds when $\mu =  \ell $. Since $ v^{0}_k = 0,\  \forall\, k,$ we find by taking $ \mu = \ell $ in \eqref{recureq} that 
	\begin{equation}\label{est0}
	\mn{\Phi  v_k^{1}}_{(\ell)}^2\le C_\ell(0) \mn{f_0}_{(\ell)}^2 \qquad \forall \, k
	\end{equation}
	where $C_\ell(0) \le C_\ell(1)  $ since $ C_\ell(t) $ is nondecreasing. If we assume for some $ j \ge 1$ that
	\begin{equation}\label{inestj}
	\mn{\Phi  v_k^{j}}_{(\ell)}^2\le C_\ell(1) \mn{f_0}_{(\ell)}^2 \qquad \forall \, k
	\end{equation}
	then by choosing $  \varrho^{m-1} \ge N \sqrt{\wt C_\ell C_\ell(1)} \mn{f_0}_{(\ell)} = \varrho_0^{m-1}$  we find using~\eqref{indest} with $ \mu = \ell $   that \eqref{inestj} holds with $ j $ replaced by $ j +1 $. Since this is true for $ j = 0 $ we obtain by induction that \eqref{inestj} holds  for any $ j $. By~\eqref{indest} we obtain for any $ \mu $  uniform bounds on $  \mn{\Phi  v_k^{j}}_{(\mu)}$  for any $j,\, k $, which by~\eqref{stripest} gives
	that $   \mn{\Phi  v^{j}}_{(\mu)}$ is uniformly bounded for any $ j $. 
	Since $ \mn{\re \varphi}_{(\mu)} \le  \mn{\varphi}_{(\mu)} $ for any $ \mu \in \br $ we also get uniform  bounds on $   \mn{\Phi \re v^{j}}_{(\mu)}$.

	By the Arzela-Ascoli theorem there exists a  subsequence $ \{ v^{k_i}\}_{k_i} $ that converges in $ C^\infty $  to a limit $ v $ on $\Phi ^{-1}(1)  $, i.e., when $ |x| \le 1/2 $, as $ k_i \to \infty $.  When $ v^{k_i} $ is real valued, we get a real limit $ v $.
	By taking the limit of the equation~\eqref{plinequation0} we find by continuity that 
	\begin{equation}\label{limequation}
	P_0(v(x),x, D) v(x) = f_0(x) 
	\end{equation}
	when $ | x| < c_2 \delta_0 $.
	We also obtain   that $ v(x) = c_{0,x,\varrho}(f_0) $ and $\partial^\alpha v(x) = c_{\alpha,x,\varrho}(f_0) $ where  $ \varrho^{m - 1} c_{0,x, \varrho}$ and $ \varrho^{m - 1 - | \alpha |} c_{\alpha,x,\varrho}  \in  \Cal E'$,  $| \alpha | < m  $, are $ o(1) $ uniformly when $ | x | \le c_2 \delta_0 $ and $ \varrho\to \infty $. 
	Thus, for $ \varrho  \ge \varrho_{1} $ large enough,  $ P_0(v(x),x, D) $ is of real principal type when $ | x| < c_2 \delta_0  $.

    Thus, if $ \varrho  \ge \max( \varrho_{0},\varrho_{1},\varrho_{\delta}) $ we have a solution the original equation~\eqref{pequation} when $ | x| < c_2 \delta_0 $ with $ v $ replaced by $u =  v + \sum_{|\alpha | < m} u_\alpha x^\alpha  $ but the condition that $ \partial^\alpha  u(0)= u_\alpha $ is only approximatively satisfied. In fact, by~\eqref{pmodequation} we have 
$$ 
f_0 = \Phi_{\delta_0 }f  - \sum_{|\alpha | < m} u_\alpha P_0 \Phi_{\delta_0 }x^\alpha/\alpha !
$$
and by linearity we find that 
$$  
\partial^\alpha  u(0)  = u_\alpha +  c_{\alpha,0,\varrho}(\Phi_{\delta_0 }f )- \sum_{|\alpha | < m} u_\alpha c_{\alpha,0,\varrho}(P_0 \Phi_{\delta_0 }x^\alpha)/\alpha !
$$ 
but $ P_0 $ depends on $ v + \sum_{| \alpha| < m} u_\alpha x^\alpha /\alpha!$ and thus on $\ol u= \set{u_\alpha}_{| \alpha| < m}$. 
Observe that the neighborhood $ \omega_0 $ is translated to a neighborhood $ \omega_{\ol u} $ of $( \ol u, 0)  $.

If we replace $  \ol u $ with indeterminate $\ol w = \set{w_\alpha}_{| \alpha| < m} $ in~\eqref{reduc} we get the square nonlinear system
\begin{equation}\label{initrho}
\partial^\alpha u(0) = w_\alpha +  d_{\alpha,\varrho} \ol w + e_{\alpha,\varrho} = u_\alpha \qquad | \alpha| < m
\end{equation}
where the coefficients $\set{ d_{\alpha,\varrho} }_{| \alpha| < m}$ and $\set{ e_{\alpha,\varrho} }_{| \alpha| < m}$ are $ o(1) $ uniformly when  $ \varrho \to \infty$ but $ d_{\alpha,\varrho} $ depends $ C^\infty $ on $ \ol  w $. 
Thus the system~\eqref{initrho}  converges to the identity when $ \varrho \to \infty $. Now, by mapping degree arguments, there exists  $ \varrho_{2} \ge 1$ so that  when $ \varrho  \ge \varrho_{2} $ there exists a solution  $ \ol w  $ to~\eqref{initrho} so that $ |  \ol w -  \ol  u | = o(1)$  as $\varrho \to \infty$. 

Thus, there exists $ \varrho_{3} \ge 1$ so that by solving~\eqref{pmodequation} with $ u_\alpha $ replaced by $ w_\alpha $ when $ \varrho  \ge \max( \varrho_{0}, \varrho_{1},\varrho_{2},\varrho_{3},\varrho_{\delta}) $ we obtain  a solution to~\eqref{pequation} when  $ | x| < c_2 \delta_0 $. In fact, if $ \varrho $ is large enough then the graph of $\{\partial^\alpha u \}_{| \alpha | < m} $ for the solution $ u $ will be in $ \omega_{\ol u} $ and $ P(u,x,D) $ will be of real principal type when  $ | x| < c_2 \delta_0 $.

In the case of real operators and real valued data, taking the real part of~\eqref{initrho} with real valued $ w_\alpha $ gives a real square system in $ \ol w $ and a real solution to~\eqref{pequation} for large $ \varrho $. This  finishes the proof of Theorem~\ref{appthm}.
\qed

\bibliographystyle{plain}

\end{document}